%
%
%
%
%
%

\documentclass[11pt,twoside]{amsart}
\usepackage{latexsym,amssymb,amsmath}
\textwidth=16.00cm
\textheight=22.00cm
\topmargin=0.00cm
\oddsidemargin=0.00cm 
\evensidemargin=0.00cm
\headheight=0cm
\headsep=1cm
\headsep=0.5cm 
\numberwithin{equation}{section}
\hyphenation{semi-stable}
\setlength{\parskip}{3pt}

\newtheorem{theorem}{Theorem}[section]
\newtheorem{lemma}[theorem]{Lemma}
\newtheorem{proposition}[theorem]{Proposition}
\newtheorem{corollary}[theorem]{Corollary}

\theoremstyle{definition}
\newtheorem{definition}[theorem]{Definition}

\newtheorem{remark}[theorem]{Remark}
\newtheorem{example}[theorem]{Example}

\begin{document}


\title{Direct products in projective Segre codes} 

\author{Azucena Tochimani}
\address{
Departamento de
Matem\'aticas\\
Centro de Investigaci\'on y de Estudios
Avanzados del
IPN\\
Apartado Postal
14--740 \\
07000 Mexico City, D.F.
}
\email{tochimani@math.cinvestav.mx}

\author{Maria Vaz Pinto}
\address{
Departamento de Matem\'atica\\
Instituto Superior T\'ecnico\\
Universidade T\'ecnica de Lisboa\\
Avenida Rovisco Pais, 1\\
1049-001 Lisboa, Portugal.
}
\email{vazpinto@math.ist.utl.pt}

\author{Rafael H. Villarreal}
\address{
Departamento de
Matem\'aticas\\
Centro de Investigaci\'on y de Estudios
Avanzados del
IPN\\
Apartado Postal
14--740 \\
07000 Mexico City, D.F.
}
\email{vila@math.cinvestav.mx}

\thanks{The first author was partially supported by CONACyT. The second
author is a member of the Center for Mathematical Analysis, Geometry,
and Dynamical Systems, Departamento de Matematica, Instituto Superior
Tecnico, 
1049-001 Lisboa, 
Portugal. The third author was partially supported by SNI}

\keywords{Reed-Muller code, Segre code, direct product, 
Segre embedding, finite field, minimum distance, Hilbert function, tensor product}
\subjclass[2000]{Primary 13P25; Secondary 15A78, 14G50, 11T71, 94B27, 94B05.}

\begin{abstract} 
Let $K=\mathbb{F}_q$ be a finite field. 
We introduce a family of projective Reed-Muller-type codes 
called {\it projective Segre codes}. Using commutative algebra and
linear algebra methods, we study their basic parameters and show that they
are direct products of projective Reed-Muller-type codes. 
As a consequence we recover some results on
projective Reed-Muller-type codes over 
the Segre variety and over projective tori. 
\end{abstract}

\maketitle

\section{Introduction}\label{section-intro}

Reed-Muller-type evaluation codes have been extensively studied 
using commutative algebra methods 
(e.g., Hilbert functions, resolutions, Gr\"obner bases); see
\cite{carvalho,geil,algcodes} and the references therein. 
In this paper we use these methods---together with linear algebra
techniques---to study projective Segre codes over finite fields. 

Let $K$ be an arbitrary field, let $a_1,a_2$ be two positive integers,
let $\mathbb{P}^{a_1-1}$, $\mathbb{P}^{a_2-1}$
be projective spaces over $K$, and let $K[\mathbf{x}]=
K[x_1,\ldots,x_{a_1}]$, $K[\mathbf{y}]=
K[y_1,\ldots,y_{a_2}]$, $K[\mathbf{t}]=K[t_{1,1},\ldots,t_{a_1,a_2}]$ be
polynomial rings with the standard grading. If $d\in\mathbb{N}$, let
$K[\mathbf{t}]_d$ denote the set of homogeneous polynomials of
total degree $d$ in $K[\mathbf{t}]$, together with the zero
polynomial. Thus $K[\mathbf{t}]_d$ is a $K$-linear space and 
$K[\mathbf{t}]=\oplus_{d=0}^\infty K[\mathbf{t}]_d$. In this grading
each $t_{i,j}$ is homogeneous of degree one. 

Given $\mathbb{X}_i\subset\mathbb{P}^{a_i-1}$, $i=1,2$, denote by
$I(\mathbb{X}_1)$ (resp. $I(\mathbb{X}_2)$) the {\it vanishing ideal\/} of
$\mathbb{X}_1$ (resp. $\mathbb{X}_2$) generated by the
homogeneous polynomials of $K[\mathbf{x}]$ (resp. $K[\mathbf{y}]$)
that vanish at all points of $\mathbb{X}_1$ 
(resp. $\mathbb{X}_2$). 
The {\it Segre embedding} is given by
\begin{eqnarray*}
\psi\colon\mathbb{P}^{a_1-1}\times\mathbb{P}^{a_2-1}
&\rightarrow&\mathbb{P}^{a_1a_2-1}\\
([(\alpha_1,\ldots,\alpha_{a_1})],[(\beta_1,\ldots,
\beta_{a_2})])&\rightarrow&[(\alpha_i\beta_j)],
\end{eqnarray*}
where $[(\alpha_i\beta_j)]:=
[(\alpha_1\beta_1,\alpha_1\beta_2,\ldots,\alpha_1\beta_{a_2},\ldots,
\alpha_{a_1}\beta_1,\alpha_{a_1}\beta_2,\ldots,\alpha_{a_1}\beta_{a_2})]$.
The map $\psi$ is well-defined and injective
\cite[p.~13]{hartshorne}. 
The image of $\mathbb{X}_1\times\mathbb{X}_2$ 
under the map $\psi$, denoted by $\mathbb{X}$, is called the {\it
Segre product\/} of $\mathbb{X}_1$ and $\mathbb{X}_2$. The vanishing ideal $I(\mathbb{X})$ of 
$\mathbb{X}$ is a graded ideal of $K[\mathbf{t}]$, where the 
$t_{i,j}$ variables are ordered as
$t_{1,1},\ldots,t_{1,a_2},\ldots,t_{a_1,1},\ldots,t_{a_1,a_2}$.
The Segre 
embedding is used in algebraic geometry, among other applications, to show that 
the product of projective varieties is again a projective variety, 
see \cite[Lecture 2]{harris}. If $\mathbb{X}_i=\mathbb{P}^{a_i-1}$
for $i=1,2$, the set $\mathbb{X}$ is a projective variety and is called a 
{\it Segre variety\/}  \cite[p.~25]{harris}. The Segre embedding is
used in coding theory, among other applications, to study the generalized Hamming weights of some
product codes; see \cite{schaathun-willems} and the references 
therein. 

The contents of this paper are as follows. Let $K=\mathbb{F}_q$ be a
finite field. In Section~\ref{prelim-section} we recall two results about the basic
parameters and the second generalized Hamming weight of direct product codes (see
Theorems~\ref{dec29-14} and \ref{wei-yang-th}). Then we introduce the family of projective
Reed-Muller-type codes, examine their basic parameters, and explain the relation between 
Hilbert functions and projective Reed-Muller-type codes (see
Proposition~\ref{jan4-15}). For an arbitrary field $K$ we show 
that $K[\mathbf{t}]/I(\mathbb{X})$ is the Segre product of 
$K[\mathbf{x}]/I(\mathbb{X}_1)$ and $K[\mathbf{y}]/I(\mathbb{X}_2)$
(see Definition~\ref{segre-product-def} and Theorem~\ref{dec4-14-1}).
The Segre product is a subalgebra of
$$(K[\mathbf{x}]/I(\mathbb{X}_1))\otimes_K(K[\mathbf{y}]/I(\mathbb{X}_2)),$$
the tensor product algebra. Segre products have been studied by
many authors; see  \cite{Eisen,GRT,kahle-segre} and the
references therein. We give full proofs of two results for which we could not find a 
reference with the corresponding proof (see Lemma~\ref{jan12-15} and 
Theorem~\ref{dec4-14-1}). Apart from this all results of this section are
well known. 

If $K=\mathbb{F}_q$ is a finite field, we introduce a
family $\{C_\mathbb{X}(d)\}_{d\in\mathbb{N}}$ of projective
Reed-Muller-type 
codes that we call {\it projective Segre codes\/} (see
Definition~\ref{segre-code-def}). 
It turns out that $C_\mathbb{X}(d)$ is
isomorphic to $K[\mathbf{t}]_d/I(\mathbb{X})_d$, as $K$-vector spaces,
where $I(\mathbb{X})_d$ is equal to $I(\mathbb{X})\cap K[\mathbf{t}]_d$. 
Accordingly $C_{\mathbb{X}_1}(d)\simeq
K[\mathbf{x}]_d/I(\mathbb{X}_1)_d$ and $C_{\mathbb{X}_2}(d)\simeq
K[\mathbf{y}]_d/I(\mathbb{X}_2)_d$. In
Section~\ref{section-segre-codes} we study the basic parameters 
(length, dimension, minimum distance) and the second generalized Hamming
weight of projective Segre codes. Our main
result expresses the basic parameters of $C_{\mathbb{X}}(d)$ in terms of those of 
$C_{\mathbb{X}_1}(d)$ and $C_{\mathbb{X}_2}(d)$, 
and shows that $C_{\mathbb{X}}(d)$ is the direct
product of $C_{\mathbb{X}_1}(d)$ and $C_{\mathbb{X}_2}(d)$ (see
Theorem~\ref{azucena-maria-vila}); this means that the direct product of
two projective Reed-Muller-type codes of degree $d$ is again a
projective Reed-Muller-type code of degree $d$.  

Formulas for the basic parameters of affine and projective
Reed-Muller-type codes are known for a number of families
\cite{cicero-victor-hiram,delsarte-goethals-macwilliams,dias-neves,
duursma-renteria-tapia,geil-thomsen,gold-little-schenck,
GR,GRS,GRT,cartesian-codes,ci-codes,sorensen}. Since affine Reed-Muller-type codes can  
be regarded as projective Reed-Muller-type codes \cite{affine-codes}, our results can 
be applied to obtain explicit formulas for the basic parameters of
$C_{\mathbb{X}}(d)$ if $C_{\mathbb{X}_1}(d)$ is in one of these families and 
$C_{\mathbb{X}_2}(d)$ is in another of these families or both are in
the same family. 

As an application we recover some results on 
Reed-Muller-type codes over the  Segre variety and over 
 projective tori \cite{GR,G-SR-M,GRH,GRT}.  Indeed, if $\mathbb{X}_1=\mathbb{P}^{a_1-1}$ 
and $\mathbb{X}_2=\mathbb{P}^{a_2-1}$, using
Theorem~\ref{azucena-maria-vila} we recover the formula for the
minimum distance of $C_\mathbb{X}(d)$ given in 
\cite[Theorem~5.1]{GRT}. If $K^*=K\setminus\{0\}$
and $\mathbb{X}_i$ is the image of $(\mathbb{K}^*)^{a_i}$, under the 
map $(K^*)^{a_i}\rightarrow\mathbb{P}^{a_i-1}$, $x\rightarrow [x]$, we call
$\mathbb{X}_i$ a {\it projective torus\/} in $\mathbb{P}^{a_i-1}$. If $\mathbb{X}_i$ is a projective torus
for $i=1,2$, using Theorem~\ref{azucena-maria-vila} we recover the formula for the
minimum distance of $C_\mathbb{X}(d)$ given in \cite[Theorem~5.5]{GR}. 
In these two cases formulas for the
basic parameters of $C_{\mathbb{X}_i}(d)$, $i=1,2$,  
are given in \cite[Theorem~1]{sorensen} and
\cite[Theorem~3.5]{ci-codes}, respectively. We also recover the
formulas for the second generalized 
Hamming weight given in \cite[Theorem~5.1]{G-SR-M} and
\cite[Theorem~3]{GRH} (see Corollary~\ref{jan9-15}).

For all unexplained 
terminology and notation, and for additional information 
we refer to \cite{CLO,Sta1} (for the theory of Hilbert
functions) and to \cite{MacWilliams-Sloane,tsfasman} (for coding theory). Our main
references for commutative algebra and multilinear algebra are 
\cite{BHer,harris} and \cite[Appendix~2]{Eisen}, respectively. 

\section{Preliminaries}\label{prelim-section}
In this section, we 
present some of the results that will be needed throughout the paper
and introduce some more notation. We study direct product codes, and some of their
properties and characterizations. The families of
Reed-Muller-type codes and projective Segre codes are introduced here,
and their relation to tensor products and Hilbert functions is
discussed. 

\paragraph{\bf Generalized Hamming weights} 
Let $K=\mathbb{F}_q$ be a finite field and let $C$ be a $[s,k]$ {\it linear
code} of {\it length} $s$ and {\it dimension} $k$, 
that is, $C$ is a linear subspace of $K^s$ with $k=\dim_K(C)$.   

Given a subcode $D$ of $C$ (that is, $D$ is a linear subspace of $C$),
the {\it support\/} of $D$, denoted $\chi(D)$, is the set of non-zero positions of $D$, that is,  
$$
\chi(D):=\{i\,\vert\, \exists\, (a_1,\ldots,a_s)\in D,\, a_i\neq 0\}.
$$

The $r$th {\it generalized Hamming weight\/} of $C$, denoted
$\delta_r(C)$, is the size of the smallest support of an
$r$-dimensional subcode, that is,
$$
\delta_r(C):=\min\{|\chi(D)|\,\colon\, D\mbox{ is a linear subcode of
}C\mbox{ with }\dim_K(D)=r\}.
$$

Let $0\neq v\in C$. The {\it Hamming weight\/} of $v$, denoted by $\omega(v)$, is the number of non-zero
entries of $v$. If $\delta(C)$ is the {\it minimum distance\/} of $C$, that is, 
$\delta(C):=\min\{\omega(v)
\colon 0\neq v\in C)\}$, then 
note that $\delta_1(C)=\delta(C)$. The {\it weight hierarchy\/} of $C$ is the sequence
$(\delta_1(C),\ldots,\delta_k(C))$. According to \cite[Theorem~1]{wei}
the weight hierarchy is an
increasing sequence 
$$
1\leq\delta_1(C)<\delta_2(C)<\cdots<\delta_r(C)\leq s,
$$
and $\delta_r(C)\leq s-k+r$ for $r=1,\ldots,k$. For $r=1$ this is the
Singleton bound for the minimum distance. Generalized Hamming weights
have received a lot of attention; see
\cite{carvalho,geil,schaathun-willems,wei,wei-yang} and the references therein.

\paragraph{\bf Direct product codes and tensor products} 

Let $C_1\subset K^{s_1}$ and $C_2\subset K^{s_2}$ be two linear codes
over the finite field $K=\mathbb{F}_q$ and let $M_{s_1\times
s_2}(K)$ be the $K$-vector space of all matrices of size
$s_1\times s_2$ with entries in $K$. 

The {\it direct product\/} (also called {\it Kronecker product\/}) of $C_1$ and $C_2$,
denoted by $C_1\,\underline{\otimes}\,C_2$, is defined to be the linear
code consisting of all $s_1\times s_2$ matrices in which the rows
belong to $C_2$
and the columns to $C_1$; see \cite[p.~44]{tsfasman}. The direct
product codes 
usually have
poor minimum distance but are easy to decode and can be useful in
certain applications; see \cite[Chapter~18]{MacWilliams-Sloane}. 

We denote the
tensor product of $C_1$ and $C_2$---in 
the sense of multilinear algebra
\cite[p.~573]{Eisen}---by $C_1\otimes_KC_2$. As is shown 
in Lemma~\ref{jan12-15} another way to see the direct product of $C_1$ and $C_2$ is as a
tensor product.

\begin{theorem}{\cite[Theorems~2.5.2 and 2.5.3]{van-lint}}\label{dec29-14} Let 
$C_i\subset K^{s_i}$ be a linear code of length $s_i$, dimension
$k_i$, and minimum distance $\delta(C_i)$ for $i=1,2$. 
Then $C_1\,\underline{\otimes}\,C_2$ has length $s_1s_2$, dimension
$k_1k_2$, and minimum distance $\delta(C_1)\delta(C_2)$. 
\end{theorem}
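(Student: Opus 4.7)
The length statement is immediate from the definition, since codewords of $C_1\,\underline{\otimes}\,C_2$ are $s_1\times s_2$ matrices and hence have $s_1 s_2$ entries.

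For the dimension, the plan is to exhibit a basis of cardinality $k_1 k_2$. Pick bases $\{u_1,\ldots,u_{k_1}\}$ of $C_1$ and $\{v_1,\ldots,v_{k_2}\}$ of $C_2$, and consider the outer products $M_{i,j}=u_i^\top v_j$. Each such matrix has every column a scalar multiple of $u_i$ (hence in $C_1$) and every row a scalar multiple of $v_j$ (hence in $C_2$), so $M_{i,j}\in C_1\,\underline{\otimes}\,C_2$. To see the $M_{i,j}$ span the whole code, take an arbitrary $M=(m_{k,\ell})\in C_1\,\underline{\otimes}\,C_2$; since each row of $M$ lies in $C_2$, one may write it uniquely as a linear combination of the $v_j$, and the coefficient vectors, read column by column, lie in $C_1$ (because the columns of $M$ lie in $C_1$ and the change of basis from standard coordinates to the $v_j$-coordinates is $K$-linear); expanding each such coefficient vector in the basis $\{u_1,\ldots,u_{k_1}\}$ yields the expression of $M$ in the $M_{i,j}$. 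Linear independence of the $M_{i,j}$ follows by running the argument in reverse. This establishes $\dim_K(C_1\,\underline{\otimes}\,C_2)=k_1k_2$ and, incidentally, realizes the isomorphism $C_1\otimes_K C_2\simeq C_1\,\underline{\otimes}\,C_2$ referenced in Lemma~\ref{jan12-15}.

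For the minimum distance, the strategy has an upper-bound half and a lower-bound half. For the upper bound, pick $c\in C_1$ with $\omega(c)=\delta(C_1)$ and $d\in C_2$ with $\omega(d)=\delta(C_2)$; the outer product $c^\top d$ is a nonzero element of $C_1\,\underline{\otimes}\,C_2$ by the argument above, and its number of nonzero entries is exactly $\omega(c)\omega(d)=\delta(C_1)\delta(C_2)$. For the lower bound, let $M\in C_1\,\underline{\otimes}\,C_2$ be a nonzero codeword and let $J\subset\{1,\ldots,s_2\}$ be the set of indices of its nonzero columns. Since $M$ has some nonzero row, and every row lies in $C_2$, that row is a nonzero codeword of $C_2$ and therefore has at least $\delta(C_2)$ nonzero entries; consequently $|J|\geq\delta(C_2)$. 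On the other hand each column in $J$ is a nonzero element of $C_1$, hence contributes at least $\delta(C_1)$ nonzero entries to $M$. Summing over $J$ yields
\[
\omega(M)\;=\;\sum_{j\in J}\omega(\text{column }j)\;\geq\;|J|\,\delta(C_1)\;\geq\;\delta(C_1)\delta(C_2),
\]
which is the desired bound.

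The main obstacle is the lower bound on the minimum distance; the rest is straightforward once the basis $\{u_i^\top v_j\}$ is in hand. The key observation that unlocks the bound is the asymmetric counting above: one controls \emph{which} columns are nonzero via a row argument (using $\delta(C_2)$), and then controls \emph{how many} nonzero entries sit in each such column via the column structure (using $\delta(C_1)$). Any attempt to count the weight row by row and column by column simultaneously without this decoupling tends to produce only $\delta(C_1)+\delta(C_2)-1$-type bounds, so keeping the two halves of the argument separate is essential.
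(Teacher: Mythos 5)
Your proof is correct. Note, however, that the paper does not prove this statement at all: Theorem~\ref{dec29-14} is quoted from van Lint (Theorems~2.5.2 and 2.5.3), so there is no internal argument to compare against. What you wrote is essentially the standard textbook proof of that cited result: the outer products $u_i^\top v_j$ give a spanning, linearly independent family of size $k_1k_2$ (your independence computation is, in substance, the same calculation the paper carries out in its proof of Lemma~\ref{jan12-15}, where it shows the tensor product maps injectively onto the direct product code), the upper bound on the distance comes from one minimum-weight outer product, and the lower bound from the decoupled count: at least $\delta(C_2)$ nonzero columns, each of weight at least $\delta(C_1)$. The only hypothesis you use implicitly is $C_i\neq(0)$, which is already built into the statement since $\delta(C_i)$ is assumed to exist; so there is no gap.
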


\begin{theorem}{\cite[Theorem~3(d)]{wei-yang}}\label{wei-yang-th} 
Let $C_1\subset K^{s_1}$ and $C_2\subset K^{s_2}$ be
two linear codes and let $C=C_1\,\underline{\otimes}\, C_2$ be their
direct product. Then 
$$
\delta_2(C)=\min\{\delta_1(C_1)\delta_2(C_2),\delta_2(C_1)\delta_1(C_2)\}.
$$
\end{theorem}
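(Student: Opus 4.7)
The plan is to prove the two inequalities $\delta_2(C)\leq\min\{\delta_1(C_1)\delta_2(C_2),\delta_2(C_1)\delta_1(C_2)\}$ and $\delta_2(C)\geq\min\{\delta_1(C_1)\delta_2(C_2),\delta_2(C_1)\delta_1(C_2)\}$ separately, regarding the matrices in $C$ as having columns in $K^{s_1}$ and rows in $K^{s_2}$. For the upper bound I would exhibit two explicit $2$-dimensional subcodes of $C$. First, pick a codeword $c\in C_1$ with $\omega(c)=\delta_1(C_1)$ and a $2$-dimensional subcode $F=\langle v_1,v_2\rangle\subseteq C_2$ with $|\chi(F)|=\delta_2(C_2)$; then $D_1:=\langle cv_1^{t},cv_2^{t}\rangle$ is a $2$-dimensional subcode of $C$ whose support factors as $\chi(c)\times\chi(F)$, so $|\chi(D_1)|=\delta_1(C_1)\delta_2(C_2)$. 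The symmetric construction, starting from a minimum-weight $v\in C_2$ and a second-weight $2$-dimensional subcode of $C_1$, produces a subcode of support size $\delta_2(C_1)\delta_1(C_2)$.

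For the lower bound, let $D\subseteq C$ be any $2$-dimensional subcode, spanned by matrices $M_1,M_2$, and let $E$ be the $K$-linear span in $K^{s_2}$ of all rows of $M_1$ and $M_2$. Since every matrix in $C$ has its rows in $C_2$, we have $E\subseteq C_2$; and since $D\neq 0$, $\dim E\in\{1,2\}$. The central dichotomy is on $\dim E$.

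If $\dim E=1$, say $E=\langle v\rangle$, then each row of $M_i$ is a scalar multiple of $v$, so $M_i=c_iv^{t}$ for some $c_i\in K^{s_1}$ (viewing $v$ as a column vector). Reading off any column $j$ with $v_j\neq 0$ shows $c_i\in C_1$, and linear independence of $M_1,M_2$ forces $c_1,c_2$ to be linearly independent. A direct computation yields $\chi(D)=\chi(\langle c_1,c_2\rangle)\times\chi(v)$, hence $|\chi(D)|\geq\delta_2(C_1)\delta_1(C_2)$. If $\dim E=2$, then $E$ is a $2$-dimensional subcode of $C_2$, so $|\chi(E)|\geq\delta_2(C_2)$; one checks that $\chi(E)$ coincides with the set $J$ of column indices $j$ for which some $M\in D$ has a nonzero $j$th column. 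For each such $j$ that column is a nonzero codeword of $C_1$, contributing at least $\delta_1(C_1)$ positions to $\chi(D)\cap(\{1,\dots,s_1\}\times\{j\})$. Summing over $j\in J$ gives $|\chi(D)|\geq|J|\,\delta_1(C_1)\geq\delta_2(C_2)\delta_1(C_1)$. In either case $|\chi(D)|\geq\min\{\delta_1(C_1)\delta_2(C_2),\delta_2(C_1)\delta_1(C_2)\}$, completing the argument.

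The hard part will be picking the right invariant on which to split the lower-bound analysis. A naive case split on whether individual columns of $D$ are pointwise linearly dependent does not close, because the scalar of proportionality can vary column by column, so $D$ need not collapse to a single rank-one tensor. Replacing that with the global row-span $E\subseteq C_2$ supplies the decisive rigidity: $\dim E=2$ lets one invoke $\delta_2(C_2)$ against the column-support of $D$ directly, while $\dim E=1$ forces $D$ into a rank-one tensor pattern where the bound reduces to $\delta_2(C_1)$ applied to the induced column code $\langle c_1,c_2\rangle\subseteq C_1$.
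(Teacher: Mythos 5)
The paper itself offers no proof of this statement---it is quoted from Wei and Yang \cite[Theorem~3(d)]{wei-yang}---so your argument has to stand on its own. Your upper bound is fine: $\langle c\,v_1^{t},\,c\,v_2^{t}\rangle$ and its mirror image are $2$-dimensional subcodes of $C_1\,\underline{\otimes}\,C_2$ with supports $\chi(c)\times\chi(F)$, of sizes $\delta_1(C_1)\delta_2(C_2)$ and $\delta_2(C_1)\delta_1(C_2)$, so $\delta_2(C)$ is at most the minimum. The lower bound follows the right idea (split according to the row span $E$), but it contains one false assertion: from $D\neq 0$ you conclude $\dim E\in\{1,2\}$. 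This is not true. $E$ is spanned by the $2s_1$ rows of $M_1$ and $M_2$, and nothing bounds that dimension by the dimension of $D$: already with $C_1=C_2=K^{3}$ and $M_1$ the identity matrix, the rows of $M_1$ alone span a $3$-dimensional space. So as written your case analysis omits all subcodes $D$ whose row span has dimension at least $3$.

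The gap is easy to close, because your second case uses only the inequality $|\chi(E)|\geq\delta_2(C_2)$. Replace the dichotomy by $\dim E=1$ versus $\dim E\geq 2$; in the latter case choose any $2$-dimensional subspace $E'\subseteq E\subseteq C_2$ and note $|\chi(E)|\geq|\chi(E')|\geq\delta_2(C_2)$ (equivalently, invoke the monotonicity of the weight hierarchy recorded in Section~\ref{prelim-section} via Wei's theorem). Everything else is correct: the identification of $\chi(E)$ with the set $J$ of indices of nonzero columns of elements of $D$, the count of at least $\delta_1(C_1)$ nonzero positions of $\chi(D)$ in each column indexed by $J$, and, when $\dim E=1$, the rank-one analysis $M_i=c_i v^{t}$ with $c_1,c_2$ linearly independent codewords of $C_1$ and $\chi(D)=\chi(\langle c_1,c_2\rangle)\times\chi(v)$. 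With that one repair your proof is complete and gives a self-contained argument for a result the paper only cites.
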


Recall that there is a natural isomorphism ${\rm vec}\colon M_{s_1\times s_2}(K)\rightarrow K^{s_1s_2}$ of
$K$-vector spaces given by ${\rm vec}(A)=(F_1,\ldots,F_{s_1})$, where
$F_1,\ldots,F_{s_1}$ are the rows of $A$. Consider the bilinear map $\psi_0$ given by
\begin{eqnarray*}
\psi_0\colon K^{s_1}\times K^{s_2}&\longrightarrow&M_{s_1\times
s_2}(K)\\
((a_1,\ldots,a_{s_1}),(b_1,\ldots,b_{s_2}))&\longmapsto&
\left[\begin{matrix}a_1b_1&a_1b_2&\ldots&a_1b_{s_2}\cr
a_2b_1&a_2b_2&\ldots&a_2b_{s_2}\cr
\vdots&\vdots & &\vdots\cr 
a_{s_1}b_1&a_{s_1}b_2&\ldots&a_{s_1}b_{s_2}\end{matrix}\right].
\end{eqnarray*}

The Segre embedding, defined in the introduction, is given by
$\psi([a],[b])=[({\rm vec}\circ\psi_0)(a,b)]$, where
$a=(a_1,\ldots,a_{s_1})$ and $b=(b_1,\ldots,b_{s_2})$.

The next lemma is not hard to prove and probably known in some
equivalent formulation; but we could not find a reference
with the corresponding proof.

\begin{lemma}\label{jan12-15} There is an isomorphism 
$T\colon C_1\otimes_KC_2\rightarrow C_1\,\underline{\otimes}\,C_2$ of
$K$-vector spaces such that $T(a\otimes b)=\psi_0(a,b)$ for $a\in C_1$ 
and $b\in C_2$.
\end{lemma}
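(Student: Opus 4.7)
The plan is to build $T$ via the universal property of the tensor product and then show bijectivity by a combination of a linear-independence check and a dimension count using Theorem~\ref{dec29-14}.

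First I would note that the bilinear map $\psi_0$ of the excerpt restricts to a bilinear map $\psi_0\colon C_1\times C_2\to M_{s_1\times s_2}(K)$. By the universal property of $\otimes_K$ there is a unique $K$-linear map
$$T\colon C_1\otimes_K C_2\longrightarrow M_{s_1\times s_2}(K), \qquad T(a\otimes b)=\psi_0(a,b)=ab^t.$$
Next I would check that the image of $T$ is contained in $C_1\,\underline{\otimes}\,C_2$. On a simple tensor $a\otimes b$ with $a\in C_1$ and $b\in C_2$, the matrix $ab^t$ has $i$-th row $a_ib\in C_2$ and $j$-th column $b_ja\in C_1$, so $\psi_0(a,b)\in C_1\,\underline{\otimes}\,C_2$. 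Since $C_1\,\underline{\otimes}\,C_2$ is a $K$-subspace and simple tensors span $C_1\otimes_K C_2$, the inclusion $T(C_1\otimes_K C_2)\subset C_1\,\underline{\otimes}\,C_2$ follows.

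The key step is to prove injectivity. Fix bases $e_1,\ldots,e_{k_1}$ of $C_1$ and $f_1,\ldots,f_{k_2}$ of $C_2$, so that $\{e_i\otimes f_j\}_{i,j}$ is a basis of $C_1\otimes_K C_2$ of cardinality $k_1k_2$. I need to show that the rank-one matrices $T(e_i\otimes f_j)=e_if_j^t$ are linearly independent in $M_{s_1\times s_2}(K)$. Suppose $\sum_{i,j}c_{ij}\,e_if_j^t=0$ and set $w_i=\sum_j c_{ij}f_j\in C_2$. Then $\sum_i e_iw_i^t=0$, and since $e_1,\ldots,e_{k_1}$ are linearly independent in $K^{s_1}$, pairing this identity against a dual family to $\{e_i\}$ forces every $w_i=0$; linear independence of the $f_j$'s then forces $c_{ij}=0$. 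Hence $T$ is injective.

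Finally, by Theorem~\ref{dec29-14} we have $\dim_K(C_1\,\underline{\otimes}\,C_2)=k_1k_2=\dim_K(C_1\otimes_K C_2)$, so the injective linear map $T\colon C_1\otimes_K C_2\to C_1\,\underline{\otimes}\,C_2$ is an isomorphism, completing the proof. The main obstacle is the linear independence of the outer products $e_if_j^t$; everything else is formal. I would avoid trying to prove surjectivity of $T$ directly (i.e.\ decomposing an arbitrary matrix in $C_1\,\underline{\otimes}\,C_2$ as a sum of outer products with factors in $C_1$ and $C_2$), since that requires essentially the same linear-algebra computation but in a less transparent form, and the dimension count handles it cleanly.
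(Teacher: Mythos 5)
Your proof is correct and follows essentially the same route as the paper: construct $T$ from the universal property of the tensor product, prove injectivity by a basis/linear-independence computation with the outer products, and conclude by the dimension count $k_1k_2$ via Theorem~\ref{dec29-14}. The only (minor, welcome) difference is that you explicitly verify the image of $T$ lands in $C_1\,\underline{\otimes}\,C_2$, a point the paper leaves implicit.
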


\begin{proof} We set $k_i=\dim_K(C_i)$ for $i=1,2$. Using the universal
property of the tensor product \cite[p.~573]{Eisen}, we get that the
bilinear map $\psi_0$ induces a linear map
\begin{eqnarray*}
T\colon C_1\otimes_K C_2&\longrightarrow& C_1\,\underline{\otimes}\,
C_2,\,\mbox{ such that},\\  
a\otimes b&\longmapsto& \psi_0(a,b)
\end{eqnarray*}
for $a\in C_1$ and
$b\in C_2$. By \cite[Formula~5, p.~267]{Mats} and Theorem~\ref{dec29-14}, one has
that $C_1\otimes_K C_2$ and $C_1\,\underline{\otimes}\,C_2$ have
dimension $k_1k_2$. Thus to prove that $T$ is an isomorphism it
suffices to prove that $T$ is a one-to-one linear map. Fix bases 
$\{\alpha_1,\ldots,\alpha_{k_1}\}$ and
$\{\beta_1,\ldots,\beta_{k_2}\}$ of $C_1$ and $C_2$, respectively.
Take any element $\gamma$ in the kernel of $T$. We can write
$$
\gamma=\sum\lambda_{i,j}\alpha_i\otimes\beta_j
$$
with $\lambda_{i,j}$ in $K$ for all $i,j$. Then 
\begin{eqnarray*}
T(\gamma)&=&\lambda_{1,1}T(\alpha_1\otimes\beta_1)+\cdots+
\lambda_{1,k_2}T(\alpha_1\otimes\beta_{k_2})+\\
& &
\lambda_{2,1}T(\alpha_2\otimes\beta_1)+\cdots+\lambda_{2,k_2}T(\alpha_2\otimes\beta_{k_2})+\\
&&\ \ \ \ \ \ \ \ \ \ \ \ \ \ \ \ \ \ \ \ \ \ \ \vdots \\
& &
\lambda_{k_1,1}T(\alpha_{k_1}\otimes\beta_1)+\cdots+\lambda_{k_1,k_2}T(\alpha_{k_1}\otimes\beta_{k_2}).
\end{eqnarray*}
Setting $\alpha_i=(\alpha_{i,1},\ldots,\alpha_{i,s_1})$, 
$\beta_j=(\beta_{j,1},\ldots,\beta_{j,s_2})$ for $i=1,\ldots,k_1$, 
$j=1,\ldots,k_2$, we get 
$$
T(\gamma)=\left[
\begin{matrix}
(\lambda_{1,1}\alpha_{1,1}\beta_1+\cdots+\lambda_{1,k_2}\alpha_{1,1}\beta_{k_2})
+\cdots+(\lambda_{k_1,1}\alpha_{k_1,1}\beta_1+\cdots+\lambda_{k_1,k_2}\alpha_{k_1,1}\beta_{k_2})\cr
(\lambda_{1,1}\alpha_{1,2}\beta_1+\cdots+\lambda_{1,k_2}\alpha_{1,2}\beta_{k_2})
+\cdots+(\lambda_{k_1,1}\alpha_{k_1,2}\beta_1+\cdots+\lambda_{k_1,k_2}\alpha_{k_1,2}\beta_{k_2})
\cr
\vdots\cr 
(\lambda_{1,1}\alpha_{1,s_1}\beta_1+\cdots+\lambda_{1,k_2}\alpha_{1,s_1}\beta_{k_2})
+\cdots+(\lambda_{k_1,1}\alpha_{k_1,s_1}\beta_1+\cdots+\lambda_{k_1,k_2}\alpha_{k_1,s_1}\beta_{k_2})
\end{matrix}
\right].
$$
Since $T(\gamma)=(0)$, using that the $\beta_i$'s are linearly independent, we get
$$
\lambda_{1,j}\alpha_1^\top+\cdots+\lambda_{k_1,j}\alpha_{k_1}^\top=0\mbox{
for }j=1,\ldots,k_2.
$$
Thus $\lambda_{i,j}=0$ for all $i,j$ and $\gamma=0$.
\end{proof}

\paragraph{\bf Hilbert functions} 

Let $K$ be a field. Recall that the {\it projective space\/} of 
dimension $s-1$ over $K$, denoted by 
$\mathbb{P}^{s-1}$, is the quotient space 
$$(K^{s}\setminus\{0\})/\sim $$
where two points $\alpha$, $\beta$ in $K^{s}\setminus\{0\}$ 
are equivalent under $\sim$ if $\alpha=\lambda{\beta}$ for some $\lambda\in K^*$. We
denote the  equivalence class of $\alpha$ by $[\alpha]$.

Let $X\neq\emptyset$ be a subset of $\mathbb{P}^{s-1}$. Consider a graded polynomial ring
$S=K[t_1,\ldots,t_s]$, over the field $K$, 
where each $t_i$ is homogeneous of degree one. Let $S_d$ denote the
set of homogeneous polynomials of 
total degree $d$ in $S$, together with the zero polynomial, and let 
$I(X)$ be the {\it vanishing ideal\/} of $X$ generated by the
homogeneous polynomials of $S$ that vanish at all points of $X$. 
The set $S_d$ is a $K$-vector space of dimension
$\binom{d+s-1}{s-1}$. We let 
$$
I(X)_d:=I(X)\cap S_d,
$$
denote the set of homogeneous polynomials in $I(X)$ of total degree $d$,
together with the zero polynomial. 
Note that $I(X)_d$ is a vector
subspace of $S_d$. The {\it Hilbert function\/} of the quotient
ring $S/I(X)$, denoted by $H_X(d)$, is defined as
$$ 
H_X(d):=\dim_K(S_d/I(X)_d). 
$$

According to a classical result of Hilbert
\cite[Theorem~4.1.3]{BHer}, there is a unique polynomial 
\begin{eqnarray*}  
h_X(t)=c_kt^k+(\mbox{terms of lower degree})
\end{eqnarray*}
of degree $k\geq 0$, with rational coefficients, such that $h_X(d)=H_X(d)$
for $d\gg 0$. The integer $k+1$ is the {\it Krull dimension\/} of 
$S/I(X)$, $k$ is the {\it dimension\/} of $X$, and 
$h_X(t)$ is the {\it Hilbert
polynomial\/} 
of $S/I(X)$. The positive integer $c_{k}(k!)$ is the {\it degree\/}
of $S/I(X)$. 
The {\it index of regularity\/}
of $S/I(X)$, denoted by  
${\rm reg}(S/I(X))$, is the least integer $r\geq 0$ such that
$h_X(d)=H_X(d)$ for $d\geq r$. The degree and the Krull dimension are
denoted  by ${\rm deg}(S/I(X))$ and $\dim(S/I(X))$, respectively. 

\begin{proposition}{\rm(\cite{duursma-renteria-tapia},
\cite{geramita-cayley-bacharach}, 
\cite{degree-lattice})}\label{hilbert-function-dim=1} 
If $X$ is a finite set and $r={\rm reg}(S/I(X))$, then 
$$
1=H_X(0)<H_X(1)<\cdots<H_X(r-1)<H_X(d)=\deg(S/I(X))=|X|\hspace{.5cm} {\it for}
\ d\geq r.
$$
\end{proposition}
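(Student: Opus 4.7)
The plan is to split the statement into three parts: the boundary values ($H_X(0)=1$ and stabilization at $|X|$), the identification of the stable value with $\deg(S/I(X))$, and the strict increase before regularity.

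First I would dispose of the easy parts. Since $X\neq\emptyset$ one has $I(X)_0=0$, giving $H_X(0)=1$. For the upper bound, write $X=\{[P_1],\ldots,[P_n]\}$, fix affine representatives $P_i\in K^s\setminus\{0\}$, and consider the evaluation map $\mathrm{ev}_d\colon S_d\to K^n$, $f\mapsto(f(P_1),\ldots,f(P_n))$; its kernel is precisely $I(X)_d$, so $H_X(d)\le n$. To see equality for large $d$, I would use a Lagrange-style separation argument: for each $i$ and each $j\neq i$ choose a linear form $L_{ij}$ with $L_{ij}(P_j)=0$ and $L_{ij}(P_i)\neq 0$, and take $F_i=\prod_{j\neq i}L_{ij}\in S_{n-1}$; the vectors $\mathrm{ev}_{n-1}(F_i)$ are a multiple of the standard basis, so $\mathrm{ev}_d$ is surjective for $d\ge n-1$. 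Hence $H_X$ stabilizes at $n$; since the Hilbert polynomial is then the constant $|X|$ and $\dim(S/I(X))=1$, this constant is by definition $\deg(S/I(X))$.

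The serious content is the strict increase $H_X(d-1)<H_X(d)$ for $1\le d\le r$, and for this I would use an Artinian reduction. Note first that the Hilbert function is preserved under field extension, so by passing to the algebraic closure I may assume $K$ is infinite. Because $X$ is a finite set of reduced $K$-rational points, the minimal primes of $I(X)$ are the finitely many ideals $\mathfrak{p}_i$ of points $[P_i]$. Since $K$ is infinite, I can pick a linear form $h\in S_1$ avoiding every $\mathfrak{p}_i$, i.e.\ $h(P_i)\neq 0$ for all $i$; this $h$ is then a non-zero-divisor on $A:=S/I(X)$. The short exact sequence
\[
0\longrightarrow A(-1)\stackrel{h}{\longrightarrow}A\longrightarrow A/hA\longrightarrow 0
\]
shows that the first difference of the Hilbert function equals the Hilbert function of $A/hA$:
\[
H_X(d)-H_X(d-1)=\dim_K(A/hA)_d.
\]
Now $A/hA$ is a standard graded $K$-algebra of Krull dimension $0$, hence Artinian; being generated in degree one, it has the property that $(A/hA)_d=0$ forces $(A/hA)_{d'}=0$ for all $d'\ge d$. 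Therefore the first difference is non-negative and, once it hits $0$, stays $0$. Combined with the stabilization at $|X|$ from the first paragraph, this forces strict increase $H_X(0)<H_X(1)<\cdots<H_X(r-1)<H_X(r)=|X|$, and constancy thereafter, which is exactly the assertion.

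The main obstacle is the strict increase step: the Artinian-reduction argument hinges on the existence of a linear non-zero-divisor, which need not exist when $K$ is small (e.g.\ when $X$ contains every $\mathbb{F}_q$-point of some coordinate hyperplane arrangement). The base-change reduction handles this provided one checks carefully that $I(X)\otimes_K\overline{K}$ agrees with the vanishing ideal of $X$ in $\overline{K}[t_1,\ldots,t_s]$, so that both the Hilbert function and the regularity are unaffected by the extension; once this is in place the rest of the argument is formal.
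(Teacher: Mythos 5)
The paper does not prove this proposition at all: it is quoted as known, with the proof deferred to the cited references, so there is no in-paper argument to compare against. Your argument is a correct, self-contained proof along standard lines: separator polynomials built from linear forms (which exist over any field, since two distinct projective points are non-proportional) give surjectivity of the evaluation map in high degrees, hence stabilization of $H_X$ at $|X|$ and the identification with $\deg(S/I(X))$; and the Artinian reduction by a linear non-zero-divisor, applied after extending to $\overline{K}$, gives that the first difference of $H_X$ is non-negative and, once zero, stays zero, which yields exactly the strict increase up to $r$. Two small points to tighten, neither fatal: (i) surjectivity of $\mathrm{ev}_d$ for $d>n-1$ needs one extra line, namely multiplying each separator $F_i$ by $L^{d-n+1}$ for some linear form $L$ with $L(P_i)\neq 0$; (ii) the base-change check you flag is genuinely needed but easy — each point is $K$-rational, so its vanishing ideal is generated by linear forms over $K$ and extends to the vanishing ideal of the same point over $\overline{K}$, and since $K\to\overline{K}$ is flat the finite intersection $I(X)=\bigcap_i\mathfrak{p}_i$ extends to $\bigcap_i\mathfrak{p}_i\overline{S}=I_{\overline{K}}(X)$, so the Hilbert function (hence the regularity index and the value $|X|$) is unchanged. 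With those lines added your proof is complete and is essentially the argument one finds in the cited sources.
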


\paragraph{\bf Projective Reed-Muller-type codes} In this part we introduce
the family of projective Reed-Muller-type codes and its connection to vanishing
ideals and Hilbert functions. 

Let $K=\mathbb{F}_q$ be a finite field and let $X=\{P_1,\ldots,P_m\}\neq\emptyset$ be a
subset of $\mathbb{P}^{s-1}$ with $m=|X|$. 
Fix a degree $d\geq 0$. For each $i$ there is $f_i\in S_d$ such that
$f_i(P_i)\neq 0$; we refer to Section~\ref{section-segre-codes} to see
a convenient way to choose  $f_1,\ldots,f_m$. There is a well-defined $K$-linear map given by  
\begin{equation}\label{ev-map}
{\rm ev}_d\colon S_d=K[t_1,\ldots,t_s]_d\rightarrow K^{|X|},\ \ \ \ \ 
f\mapsto
\left(\frac{f(P_1)}{f_1(P_1)},\ldots,\frac{f(P_m)}{f_m(P_m)}\right).
\end{equation}

The map ${\rm ev}_d$ is called an {\it evaluation map}. The image of 
$S_d$ under ${\rm ev}_d$, denoted by  $C_X(d)$, is called a {\it
projective Reed-Muller-type code\/} of degree $d$ over the set $X$ \cite{duursma-renteria-tapia,GRT}. It is
also called an {\it evaluation code\/} associated to $X$
\cite{gold-little-schenck}. The kernel of the
evaluation map ${\rm ev}_d$ is $I(X)_d$. Hence there is an isomorphism of $K$-vector spaces
$S_d/I(X)_d\simeq C_X(d)$.

\begin{definition}\label{segre-code-def}
If $\mathbb{X}$ is the Segre product of
$\mathbb{X}_1$ and $\mathbb{X}_2$, we say that $C_{\mathbb{X}}(d)$ is a
{\it projective Segre code\/} of degree  $d$; recall that $\mathbb{X}$
is the image of $\mathbb{X}_1\times\mathbb{X}_2$ under the Segre
embedding $\psi$.
\end{definition}

\begin{definition}\label{basic-parameters-def} The {\it basic parameters} of the linear
code $C_X(d)$ are:
\begin{itemize}
\item[(a)] {\it length\/}: $|X|$,

\item[(b)] {\it dimension\/}: $\dim_K C_X(d)$, and 

\item[(c)] {\it minimum distance\/}: $\delta(C_X(d))$. We also denote $\delta(C_X(d))$ simply by
$\delta_X(d)$.
\end{itemize}
\end{definition}

The basic parameters of projective Reed-Muller-type codes have been computed in a number
of cases. If $X=\mathbb{P}^{s-1}$ then $C_X(d)$ is the {\it classical projective
Reed--Muller code\/} and its basic parameters are described in
\cite[Theorem~1]{sorensen}. If $X$ is a projective torus, $C_X(d)$ is
the {\it generalized projective 
Reed--Solomon code\/} and its basic parameters are described in
\cite[Theorem~3.5]{ci-codes}. 

The following summarizes the well-known relation between 
projective Reed-Muller-type codes and the theory of Hilbert functions.

\begin{proposition}{\rm(\cite{GRT}, \cite{algcodes})}\label{jan4-15}
The following hold. 
\begin{itemize}
\item[{\rm (i)}] $H_X(d)=\dim_KC_X(d)$ for $d\geq 0$.
\item[{\rm (ii)}] $\delta_X(d)=1$ for $d\geq {\rm reg}(S/I(X))$.  
\item[{\rm (iii)}] $S/I(X)$ is a Cohen--Macaulay graded ring of dimension
$1$. 
\item[{\rm (iv)}] $C_X(d)\neq(0)$ for $d\geq 0$. 
\end{itemize}
\end{proposition}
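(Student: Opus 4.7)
The plan is to dispatch parts (i), (ii), and (iv) directly from the construction of the evaluation map ${\rm ev}_d$ combined with Proposition~\ref{hilbert-function-dim=1}, and then concentrate on the Cohen-Macaulay assertion (iii), which carries the substantive content.

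For (i), the map ${\rm ev}_d$ is $K$-linear with image $C_X(d)$ by construction, and the excerpt records that its kernel is $I(X)_d$; the first isomorphism theorem therefore yields $S_d/I(X)_d \simeq C_X(d)$, and comparing $K$-dimensions gives $H_X(d)=\dim_K C_X(d)$. For (iv), the polynomial $f_i$ chosen in the definition of ${\rm ev}_d$ satisfies $f_i(P_i)\neq 0$, so ${\rm ev}_d(f_i)$ has a nonzero $i$-th coordinate; hence $C_X(d)\neq(0)$ for every $d\geq 0$. For (ii), once $d\geq {\rm reg}(S/I(X))$, Proposition~\ref{hilbert-function-dim=1} gives $H_X(d)=|X|$, which combined with (i) forces $C_X(d)=K^{|X|}$; every standard basis vector then lies in $C_X(d)$, so $\delta_X(d)=1$.

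The substantive assertion is (iii). Since $X=\{P_1,\ldots,P_m\}$ is a finite set of points of $\mathbb{P}^{s-1}$, I would write $I(X)=\bigcap_{i=1}^m \mathfrak{p}_i$, where $\mathfrak{p}_i\subset S$ is the homogeneous prime vanishing at $P_i$. Each $\mathfrak{p}_i$ has height $s-1$, which immediately gives $\dim(S/I(X))=1$. Because $I(X)$ is radical, its associated primes are exactly the minimal primes $\mathfrak{p}_1,\ldots,\mathfrak{p}_m$, with no embedded components. To upgrade this equidimensionality to the Cohen-Macaulay property, it suffices to exhibit a homogeneous non-zero-divisor on $S/I(X)$ of positive degree; equivalently, a homogeneous element of the irrelevant ideal $\mathfrak{m}=(t_1,\ldots,t_s)$ lying outside every $\mathfrak{p}_i$.

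The main obstacle is supplying such a non-zero-divisor when $K$ is finite, since a generic linear form need not exist. The remedy is the graded prime avoidance lemma: each $\mathfrak{p}_i$ has height $s-1<s$, so $\mathfrak{m}\not\subset\mathfrak{p}_i$ for every $i$, and the graded version of prime avoidance (applicable because every $\mathfrak{p}_i$ is prime) produces a homogeneous element of positive degree in $\mathfrak{m}$ that misses every $\mathfrak{p}_i$. Such an element is a non-zero-divisor on $S/I(X)$, giving ${\rm depth}(S/I(X))\geq 1=\dim(S/I(X))$, so $S/I(X)$ is Cohen-Macaulay as claimed.
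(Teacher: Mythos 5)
Your proposal is correct. Note that the paper itself offers no proof of Proposition~\ref{jan4-15}: it is quoted from \cite{GRT} and \cite{algcodes}, so there is no in-text argument to compare against; what you have written is a legitimate self-contained derivation. Parts (i), (ii), (iv) are exactly the standard bookkeeping: $\ker({\rm ev}_d)=I(X)_d$ gives $S_d/I(X)_d\simeq C_X(d)$, the chosen $f_i$ with $f_i(P_i)\neq 0$ shows ${\rm ev}_d(f_i)\neq 0$, and for $d\geq{\rm reg}(S/I(X))$ Proposition~\ref{hilbert-function-dim=1} forces $\dim_K C_X(d)=|X|$, hence $C_X(d)=K^{|X|}$ and $\delta_X(d)=1$. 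For the substantive part (iii) your argument is the standard one and it is sound: $I(X)=\bigcap_{i=1}^m\mathfrak{p}_i$ with each $\mathfrak{p}_i$ the height-$(s-1)$ prime of a $K$-rational point, so $\dim(S/I(X))=1$ and, being radical, $S/I(X)$ has ${\rm Ass}={\rm Min}=\{\mathfrak{p}_1,\ldots,\mathfrak{p}_m\}$; since $\mathfrak{m}\not\subset\mathfrak{p}_i$ for every $i$, graded prime avoidance (e.g.\ \cite[Lemma~1.5.10]{BHer}) yields a homogeneous element of positive degree outside every associated prime, i.e.\ a homogeneous nonzerodivisor, so ${\rm depth}=1=\dim$. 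You correctly identified the one genuine subtlety, namely that over a finite field a degree-one nonzerodivisor need not exist, so the avoiding element may have degree larger than one; this costs nothing for the Cohen--Macaulay conclusion.
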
 

\paragraph{\bf Segre products} To avoid repetitions, we continue to employ
the notations and definitions used in Section~\ref{section-intro}. For
the rest of this section $K$ will denote an arbitrary field.

A {\it standard algebra\/} over a field $K$ is a 
finitely generated graded $K$-algebra 
$A=\bigoplus_{d=0}^{\infty}A_d$ such that $A=K[A_1]$ and $A_0=K$ (that is, $A$ is
isomorphic to $K[\mathbf{x}]/I$, for some polynomial ring
$K[\mathbf{x}]$ with the 
standard grading and for some graded ideal $I$).

\begin{definition}{\cite[p.~304]{Eisen}}\label{segre-product-def}\rm\
Let $A=\oplus_{d\geq 0}A_d$, $B=\oplus_{d\geq 0}B_d$ be 
two standard algebras over a field 
$K$. The {\it Segre product\/} of $A$ and $B$, denoted by
$A\otimes_\mathcal{S}B$, is the graded algebra 
$$
A\otimes_{\mathcal S}B:=
(A_0\otimes_K B_0)\oplus(A_1\otimes_K B_1)\oplus\cdots 
\subset A\otimes_K B,
$$
with the normalized grading $(A\otimes_{\mathcal S}B)_d:=A_d\otimes_K B_d$ for $d\geq 0$. 
The tensor product algebra $A\otimes_K B$ is graded by
$$(A\otimes_K B)_p:=\sum_{i+j=p} A_i\otimes_K B_j.
$$ 
\end{definition}

\begin{example}{\cite[p.~161]{BG-book}} The Segre product (resp.
tensor product) of $K[\mathbf{x}]$ and 
$K[\mathbf{y}]$ is $$K[\mathbf{x}]\otimes_\mathcal{S} K[\mathbf{y}]\simeq 
K[\{x_iy_j\vert\, 1\leq i\leq a_1,\, 1\leq j\leq a_2\}]$$ 
(resp. $K[\mathbf{x}]\otimes_KK[\mathbf{y}]\simeq
K[\mathbf{x},\mathbf{y}]$). Notice that the elements $x_iy_j$ have
normalized degree $1$ as elements of $K[\mathbf{x}]\otimes_\mathcal{S}
K[\mathbf{y}]$ and total degree $2$ as elements of
$K[\mathbf{x}]\otimes_K K[\mathbf{y}]$.
\end{example}

The next result is well-known assuming that $\mathbb{X}_1$ and 
$\mathbb{X}_2$ are projective
algebraic sets; see for instance \cite[Exercise 13.14(d)]{Eisen}.
However David Eisenbud pointed out to us that the result is valid in
general. We give a proof of the general case.

\begin{theorem}\label{dec4-14-1} Let $K$ be a field. If $\mathbb{X}_1$, $\mathbb{X}_2$ are subsets
of the projective spaces  $\mathbb{P}^{a_1-1}$, $\mathbb{P}^{a_2-1}$,
respectively, and $\mathbb{X}$ is the Segre product of $\mathbb{X}_1$
and $\mathbb{X}_2$,  
then the following hold:
\begin{itemize}
\item[(a)]
$(K[\mathbf{x}]/I(\mathbb{X}_1))_d\otimes_K(K[\mathbf{y}]/I(\mathbb{X}_2))_d
\simeq(K[\mathbf{t}]/I(\mathbb{X}))_d$ as $K$-vector spaces for $d\geq
0$. 
\item[(b)] $K[\mathbf{x}]/I(\mathbb{X}_1)\otimes_{\mathcal{S}}K[\mathbf{y}]/I(\mathbb{X}_2)
\simeq K[\mathbf{t}]/I(\mathbb{X})$ as standard graded algebras. 
\item[(c)] $H_{\mathbb{X}_1}(d)H_{\mathbb{X}_2}(d)=H_{\mathbb{X}}(d)$
for $d\geq 0$.
\item[(d)] ${\rm reg}(K[\mathbf{t}]/I(\mathbb{X}))=\max\{{\rm
reg}(K[\mathbf{x}]/I(\mathbb{X}_1)), {\rm
reg}(K[\mathbf{y}]/I(\mathbb{X}_2))\}$.
\item[(e)] If $\rho_1=\dim(K[\mathbf{x}]/I(\mathbb{X}_1))$ and 
$\rho_2=\dim(K[\mathbf{y}]/I(\mathbb{X}_2))$, then
$$
\deg(K[\mathbf{t}]/I(\mathbb{X}))=
\deg(K[\mathbf{x}]/I(\mathbb{X}_1))\deg(K[\mathbf{y}]/I(\mathbb{X}_2))
\binom{\rho_1+\rho_2-2}{\rho_1-1}.
$$
\end{itemize}
\end{theorem}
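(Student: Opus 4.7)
\medskip

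\noindent\textbf{Proof plan.} The heart of the theorem is part (a); parts (b)--(e) will follow formally from it. The plan is to construct a natural graded $K$-algebra homomorphism
$$
\Phi\colon K[\mathbf{t}]\longrightarrow
\bigl(K[\mathbf{x}]/I(\mathbb{X}_1)\bigr)\otimes_{\mathcal S}
\bigl(K[\mathbf{y}]/I(\mathbb{X}_2)\bigr),\qquad
t_{i,j}\longmapsto \overline{x_i}\otimes\overline{y_j},
$$
and to identify its kernel with $I(\mathbb{X})$. Write $A:=K[\mathbf{x}]/I(\mathbb{X}_1)$ and $B:=K[\mathbf{y}]/I(\mathbb{X}_2)$. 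By construction $\Phi$ is graded and carries $K[\mathbf{t}]_d$ into $A_d\otimes_K B_d=(A\otimes_{\mathcal S}B)_d$. Using the fact that every monomial $x^a\otimes y^b$ with $|a|=|b|=d$ is the image of some monomial in $K[\mathbf{t}]_d$ (one fills out an $a_1\times a_2$ nonnegative integer matrix with row sums $a$ and column sums $b$), $\Phi_d$ is surjective onto $A_d\otimes_K B_d$.

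For the kernel, lift $\Phi$ to the $K$-algebra map $\phi\colon K[\mathbf{t}]\to K[\mathbf{x}]\otimes_K K[\mathbf{y}]$ sending $t_{i,j}\mapsto x_i y_j$; so $\phi(f)$ is a bihomogeneous polynomial $\tilde f(\mathbf{x},\mathbf{y})$ of bidegree $(d,d)$, and by the definition of $\psi$, $\tilde f(\alpha,\beta)=f(\psi_0(\alpha,\beta))$ at any representatives $\alpha\in K^{a_1}$, $\beta\in K^{a_2}$. Because tensoring over a field is exact, $\ker\Phi_d=\phi_d^{-1}\bigl(I(\mathbb{X}_1)_d\otimes_K K[\mathbf{y}]_d+K[\mathbf{x}]_d\otimes_K I(\mathbb{X}_2)_d\bigr)$. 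The inclusion $I(\mathbb{X})_d\supseteq\ker\Phi_d$ is immediate from $\tilde f(\alpha,\beta)=f(\psi_0(\alpha,\beta))$, so any $f$ in that preimage vanishes on $\mathbb{X}$. For the other inclusion, given $f\in I(\mathbb{X})_d$, for every $\beta$ representing a point of $\mathbb{X}_2$ the polynomial $\tilde f(\mathbf{x},\beta)\in K[\mathbf{x}]_d$ vanishes on $\mathbb{X}_1$, hence lies in $I(\mathbb{X}_1)_d$. Fix a basis $e_1,\dots,e_r$ of $A_d$ and expand $\tilde f\equiv\sum_k \tilde e_k\,g_k(\mathbf{y})\pmod{I(\mathbb{X}_1)_d\otimes_K K[\mathbf{y}]_d}$ with $g_k\in K[\mathbf{y}]_d$; the vanishing condition forces $\sum_k e_k\,g_k(\beta)=0$ in $A_d$ for every $\beta\in\mathbb{X}_2$, whence $g_k\in I(\mathbb{X}_2)_d$. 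This places $\tilde f$ in $I(\mathbb{X}_1)_d\otimes_K K[\mathbf{y}]_d+K[\mathbf{x}]_d\otimes_K I(\mathbb{X}_2)_d$, proving (a). Part (b) is then obtained by collecting the isomorphisms of (a) across all $d$ and observing that $\Phi$ is a $K$-algebra homomorphism, so the induced degree-preserving bijection is an isomorphism of standard graded algebras.

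Part (c) is immediate from (a): $H_{\mathbb{X}}(d)=\dim_K\bigl(A_d\otimes_K B_d\bigr)=H_{\mathbb{X}_1}(d)H_{\mathbb{X}_2}(d)$. For (d), the product relation of (c) shows that the Hilbert polynomials multiply, $h_{\mathbb{X}}(t)=h_{\mathbb{X}_1}(t)h_{\mathbb{X}_2}(t)$; setting $r=\max\{\mathrm{reg}(A),\mathrm{reg}(B)\}$ gives $H_{\mathbb{X}}(d)=h_{\mathbb{X}}(d)$ for $d\ge r$, i.e.\ $\mathrm{reg}(K[\mathbf{t}]/I(\mathbb{X}))\le r$; the reverse inequality comes from specialising $d=r-1$ where (assuming WLOG $r=\mathrm{reg}(A)>\mathrm{reg}(B)$) $H_{\mathbb{X}_2}(d)=h_{\mathbb{X}_2}(d)>0$ while $H_{\mathbb{X}_1}(d)\ne h_{\mathbb{X}_1}(d)$, so the product entries differ. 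Finally, for (e), read off the leading coefficients of $h_{\mathbb{X}_1}$ and $h_{\mathbb{X}_2}$: if $h_{\mathbb{X}_i}(t)=c_i t^{\rho_i-1}+\cdots$ then $\deg(A)\deg(B)=c_1 c_2 (\rho_1-1)!(\rho_2-1)!$, while $h_{\mathbb{X}}(t)=c_1c_2\,t^{\rho_1+\rho_2-2}+\cdots$ yields $\deg(K[\mathbf{t}]/I(\mathbb{X}))=c_1c_2(\rho_1+\rho_2-2)!$, and rearranging produces the stated binomial factor.

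The main obstacle is the kernel computation in (a): specifically, showing that if $f\in I(\mathbb{X})_d$ then $\tilde f$ actually lies in the algebraic sum $I(\mathbb{X}_1)_d\otimes_K K[\mathbf{y}]_d+K[\mathbf{x}]_d\otimes_K I(\mathbb{X}_2)_d$ and not merely that it vanishes on $\mathbb{X}_1\times\mathbb{X}_2$. This step requires a genuinely algebraic decomposition---choosing a basis of $A_d$ and separating the $\mathbf{x}$- and $\mathbf{y}$-dependence---and is exactly the place where working over an arbitrary field (rather than an algebraically closed one) matters; the rest of the argument is then bookkeeping with Hilbert functions and leading coefficients.
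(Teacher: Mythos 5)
Your proof of part (a), which is the heart of the theorem, is correct and morally close to the paper's, though organized differently. The paper constructs two surjections, one in each direction: a map $\overline{\varphi}$ from $(K[\mathbf{x}]_d/I(\mathbb{X}_1)_d)\otimes_K(K[\mathbf{y}]_d/I(\mathbb{X}_2)_d)$ onto $K[\mathbf{t}]_d/I(\mathbb{X})_d$ induced by a monomial section of $t_{i,j}\mapsto x_iy_j$, and a map $\sigma^*$ the other way; the hard point, that $I(\mathbb{X})_d$ is killed, is proved by induction on the number of terms in a representation $\sigma(f)=\sum_{\ell}f_\ell g_\ell$, eliminating $g_k$ by evaluating at a point $\alpha\in\mathbb{X}_1$ with $f_k(\alpha)\neq 0$. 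You instead use a single graded map $\Phi$ and compute its kernel directly, choosing a basis of $K[\mathbf{x}]_d$ adapted to the subspace $I(\mathbb{X}_1)_d$; that adapted-basis argument is a clean, non-inductive version of the same separation-of-variables step, and the first isomorphism theorem replaces the paper's ``surjections both ways between finite-dimensional spaces'' count. Your (b), (c) and (e) are fine (the paper leaves these as consequences of (a) and its proof), and the leading-coefficient computation in (e) is exactly what is needed.

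Part (d), however, has a genuine gap: the reduction ``assume WLOG $\mathrm{reg}(A)>\mathrm{reg}(B)$'' is not a WLOG. If $\mathrm{reg}(A)=\mathrm{reg}(B)=r\geq 1$, then at $d=r-1$ you only know $H_{\mathbb{X}_1}(d)\neq h_{\mathbb{X}_1}(d)$ and $H_{\mathbb{X}_2}(d)\neq h_{\mathbb{X}_2}(d)$, and two products can coincide even when both factors differ, so ``the product entries differ'' does not follow. For finite $\mathbb{X}_1,\mathbb{X}_2$ (the only case used in the rest of the paper) the gap closes easily via Proposition~\ref{hilbert-function-dim=1}: for $d<\mathrm{reg}$ one has $0<H_{\mathbb{X}_i}(d)<|\mathbb{X}_i|=h_{\mathbb{X}_i}(d)$, so at $d=r-1$ the product $H_{\mathbb{X}_1}(d)H_{\mathbb{X}_2}(d)$ is strictly smaller than $h_{\mathbb{X}_1}(d)h_{\mathbb{X}_2}(d)$ in either case. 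Some such input is genuinely needed and your Hilbert-polynomial bookkeeping alone cannot supply it: for infinite $\mathbb{X}_i$ the Hilbert function can exceed the Hilbert polynomial below the regularity index (a plane quintic curve has $H(2)=6>5=h(2)$), so the sign of $H_{\mathbb{X}_i}(r-1)-h_{\mathbb{X}_i}(r-1)$ is not controlled and the two products can coincide; this is a subtlety that the statement's generality (arbitrary field, arbitrary subsets) makes real, and which the monotonicity of Hilbert functions of finite point sets is what rescues in the intended setting.
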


\begin{proof} (a): Let $\sigma$ be the epimorphism of $K$-algebras $\sigma\colon
K[\mathbf{t}]\rightarrow K[\{x_iy_j\vert\, i\in[\![1,a_1]\!],\,
j\in[\![1,a_2]\!]\}]$ induced by $t_{ij}\mapsto x_iy_j$, where
$[\![1,a_i]\!]=\{1,\ldots,a_i\}$. For each
$x^by^c$ with $\deg(x^b)=\deg(y^c)=d$ there is a unique $t^a\in
K[\mathbf{t}]_d$ such that $t^a=t_{i_1,j_1}\cdots t_{i_d,j_d}$,
$1\leq i_1\leq\cdots\leq i_d$, $1\leq j_1\leq\cdots\leq j_d$ and
$\sigma(t^a)=x^by^c$. Notice that if $\sigma(t^\alpha)=x^by^c$ for some
other monomial $t^\alpha\in K[\mathbf{t}]_d$, then $t^a-t^\alpha\in
I(\mathbb{X})$. This is used below to ensure that the mapping of
Eq.~(\ref{dec4-14}) is surjective. Setting $\varphi_0(x^b,y^c)=t^{a}$,
we get a $K$-bilinear map
$$ 
\varphi_0\colon K[\mathbf{x}]_d\times K[\mathbf{y}]_d\rightarrow
K[\mathbf{t}]_d
$$
induced by $\varphi_0(x^b,y^c)=t^a$. Notice that
$\varphi_0(\sum\lambda_kx^{b_k},\sum\mu_\ell y^{c_\ell})=
\sum\lambda_k\mu_\ell\varphi_0(x^{b_k},y^{c_\ell})$, where the $\lambda_k$'s and 
$\mu_\ell$'s are in $K$. To show that $\varphi_0$ induces
a $K$-bilinear map 
\begin{equation}\label{dec4-14}
\varphi\colon (K[\mathbf{x}]_d/I(\mathbb{X}_1)_d)\times
(K[\mathbf{y}]_d/I(\mathbb{X}_2)_d)\rightarrow
K[\mathbf{t}]_d/I(\mathbb{X})_d,\ \
(\overline{x^b},\overline{y^c})\mapsto \overline{\varphi_0(x^b,y^c)},
\end{equation}
which is a surjection, it suffices to show that for any 
$f\in K[\mathbf{x}]_d$ that vanish on $\mathbb{X}_1$ (resp. $g\in
K[\mathbf{y}]_d$ that vanish on $\mathbb{X}_2$) one has that
$\varphi_0(f,g)$ vanishes at all points of $\mathbb{X}$. Assume that
$f=\lambda_1x^{b_1}+\cdots+\lambda_mx^{b_m}$ is a polynomial in $K[\mathbf{x}]_d$ that
vanish on $\mathbb{X}_1$ and that
$g=\mu_1y^{c_1}+\cdots+\mu_ry^{c_r}$ is a polynomial in $K[\mathbf{y}]_d$ with $\lambda_k$, $\mu_\ell$ in $K$ for
all $k,\ell$. For each $x^{b_k}y^{c_\ell}$ there is $t^{a_{k\ell}}\in K[\mathbf{t}]$ such that
$\sigma(t^{a_{k\ell}})=x^{b_k}y^{c_\ell}$. Then 
\begin{eqnarray*}
\varphi_0(f,g)&=&\sum\lambda_k\mu_\ell\varphi_0(x^{b_k},y^{c_\ell})=
\sum\lambda_k\mu_\ell t^{a_{k\ell}},
\mbox{ and}\\
\varphi_0(f,g)(x_iy_j)&=&(\lambda_1x^{b_1}+\cdots+\lambda_mx^{b_m})(\mu_1y^{c_1}+\cdots+\mu_ry^{c_r}), 
\end{eqnarray*}
where we use $(x_iy_j)$ as a short hand for
$(x_1y_1,x_1y_2,\ldots,x_1y_{a_2},
\ldots,x_{a_1}y_1,x_{a_1}y_2,\ldots,x_{a_1}y_{a_2})$. Now if
$[(\alpha_1,\ldots,\alpha_{a_1})]$ is in $\mathbb{X}_1$ and 
$[(\beta_1,\ldots,\beta_{a_2})]$ is in $\mathbb{X}_2$, making $x_i=
\alpha_i$ and $y_j=\beta_j$ for all $i,j$ in the last equality, we get
$\varphi_0(f,g)(\alpha_i\beta_j)=0$. Therefore, by the universal property of the
tensor product \cite[p.~573]{Eisen}, there is a surjective map 
$\overline{\varphi}$ that makes the following diagram commutative: 

$$
\setlength{\unitlength}{.040cm}
\begin{picture}(20,10)
\put(-150,0){$(K[\mathbf{x}]_d/I(\mathbb{X}_1)_d)\times
(K[\mathbf{y}]_d/I(\mathbb{X}_2)_d)$}
\put(27,0){$(K[\mathbf{x}]_d/I(\mathbb{X}_1)_d)\otimes_K
(K[\mathbf{y}]_d/I(\mathbb{X}_2)_d)$}
\put(-3,3){\vector(1,0){20}}
\put(3,8){$\phi$}
\put(-79,-5){\vector(0,-1){20}}
\put(-77,-15){$\varphi$}
\put(-105,-37){$K[\mathbf{t}]_d/I(\mathbb{X})_d$}
\put(10,-22){$\overline{\varphi}$}
\put(90,-7){\vector(-4,-1){100}}
\end{picture}
$$
\vspace{1.3cm} 

\noindent where $\phi$ is the canonical 
map, given by
$\phi(\overline{f},\overline{g})=\overline{f}\otimes\overline{g}$,
 and $\varphi=\overline{\varphi}\phi$. 

For each $t^\alpha\in K[\mathbf{t}]_d$ let $x^{b}\in
K[\mathbf{x}]_d$ and $y^c\in K[\mathbf{y}]_d$ be such that
$\sigma(t^{\alpha})=x^{b}y^{c}$. We set $\sigma_1(t^{\alpha})=x^b$ and
$\sigma_2(t^{\alpha})=y^c$. 
Thus we have
a surjective $K$-linear map 
$$
\sigma_0^*\colon K[\mathbf{t}]_d\rightarrow 
K[\mathbf{x}]_d/I(\mathbb{X}_1)_d\otimes_K
K[\mathbf{y}]_d/I(\mathbb{X}_2)_d
$$
given by $\sigma_0^*(\sum\lambda_\alpha t^\alpha)=
\sum\lambda_\alpha\overline{\sigma_1(t^\alpha)}\otimes\overline{\sigma_2(t^\alpha)}$, where the
$\lambda_\alpha$'s are in $K$. Notice that the $K$-vector space on the right
hand side is generated by all $\overline{x^b}\otimes \overline{y^c}$ 
such that $x^b\in K[\mathbf{x}]_d$ and $y^c\in
K[\mathbf{y}]_d$. Take $f\in I(\mathbb{X})_d$, then
$\sigma(f)(\alpha_i\beta_j)=0$ for all
$\alpha=[(\alpha_1,\ldots,\alpha_{a_1})]\in\mathbb{X}_1$ and all
$\beta=[(\beta_1,\ldots,\beta_{a_2})]\in\mathbb{X}_2$. We can write
$\sigma(f)=\sum_{\ell=1}^kf_\ell g_\ell$ with $f_\ell\in K[\mathbf{x}]_d$, 
$g_\ell\in K[\mathbf{y}]_d$ for $\ell=1,\ldots,k$, and
$\sigma_0^*(f)=\sum_{\ell=1}^k\overline{f_\ell}\otimes\overline{g_\ell}$. Next
we show, by induction on $k$, that $\sigma_0^*(f)=0$, i.e., $f\in{\rm
ker}(\sigma_0^*)$. If $k=1$, we may assume that $f_1\notin
I(\mathbb{X}_1)$ otherwise $\overline{f_1}=\overline{0}$. Pick
$\alpha\in\mathbb{X}_1$ such that $f_1(\alpha)\neq 0$. Then, as
$f_1(\alpha)g_1(\beta)=0$ for all $\beta\in\mathbb{X}_2$, one has
$g_1\in I(\mathbb{X}_2)$ and $\overline{g_1}=\overline{0}$. We may now
assume that $k>1$ and $\overline{f_k}\neq 0$. Pick
$\alpha\in\mathbb{X}_1$ such that $f_k(\alpha)\neq 0$. By hypothesis
the polynomial
$$  
f_1(\alpha)g_1+\cdots+f_k(\alpha)g_k
$$
is in $K[\mathbf{y}]_d$ and vanishes at all points of $\mathbb{X}_2$.
Thus
$$
\overline{g_k}=-(f_1(\alpha)/f_k(\alpha))\overline{g}_1-
\cdots-(f_{k-1}(\alpha)/f_k(\alpha))\overline{g}_{k-1}.
$$
Therefore, setting $h_\ell=f_\ell-(f_\ell(\alpha)/f_k(\alpha))f_k$ for
$\ell=1,\ldots,k-1$, we get 
$$ 
\sigma_0^*(f)=\sum_{\ell=1}^k\overline{f_\ell}\otimes\overline{g}_\ell=\sum_{\ell=1}^{k-1}
\overline{h}_\ell\otimes\overline{g}_\ell
$$
and $\sum_{\ell=1}^{k-1}h_\ell(\gamma)g_\ell(\beta)=0$ for all
$\gamma\in\mathbb{X}_1$ and $\beta\in\mathbb{X}_2$. Thus, by
induction, $\sigma_0^*(f)=0$. Hence $I(\mathbb{X})_d \subset{\rm
ker}(\sigma_0^*)$. Therefore $\sigma_0^*$ induces a $K$-linear
surjection
$$
\sigma^*\colon K[\mathbf{t}]_d/I(\mathbb{X})_d\rightarrow 
(K[\mathbf{x}]_d/I(\mathbb{X}_1)_d)\otimes_K
(K[\mathbf{y}]_d/I(\mathbb{X}_2)_d).
$$
Altogether we get that the linear maps $\overline{\varphi}$ and $\sigma^*$ are bijective.  

Items (b) to (e) follow directly from (a) and its proof.
\end{proof}

\section{Basic parameters of projective Segre codes}\label{section-segre-codes}
In this section we study projective Segre codes and their basic
parameters; including the second generalized Hamming weight. It is
shown that direct product codes of projective Reed-Muller-type codes 
are projective Segre codes.  Then some applications are given. 
We continue to employ the notations and definitions used in
Sections~\ref{section-intro} and \ref{prelim-section}. 

In preparation for our main theorem, let $K=\mathbb{F}_q$ be a finite field, let $a_1,a_2$ be two 
positive integers with $a_1\geq a_2$, and for $i=1,2$, let $\mathbb{X}_i$ be 
a non-empty subset of the projective space $\mathbb{P}^{a_i-1}$ over
$K$. 
We set $s=a_1a_2$ and $s_i=|\mathbb{X}_i|$ for $i=1,2$. 
The {\it Segre embedding\/} is given by
\begin{eqnarray*}
\psi\colon\mathbb{P}^{a_1-1}\times\mathbb{P}^{a_2-1}&\rightarrow&\mathbb{P}^{a_1a_2-1}=\mathbb{P}^{s-1}\\
([(\alpha_1,\ldots,\alpha_{a_1})],[(\beta_1,\ldots,\beta_{a_2})])&\rightarrow&[(\alpha_1\beta_1,\alpha_1\beta_2,\ldots,\alpha_1\beta_{a_2},\\
& &\ \ \alpha_2\beta_1,\alpha_2\beta_2,\ldots,\alpha_2\beta_{a_2},\\
& &\ \ \ \  \ \ \ \ \ \ \  \vdots\\
&
&\ \
\alpha_{a_1}\beta_1,\alpha_{a_1}\beta_2,\ldots,\alpha_{a_1}\beta_{a_2})].
\end{eqnarray*}

The image of $\mathbb{X}_1\times \mathbb{X}_2$ under the map $\psi$,
denoted by $\mathbb{X}$, is the {\it Segre product\/} of $\mathbb{X}_1$ and $\mathbb{X}_2$. 
As $\psi$ is injective, we get 
$|\mathbb{X}|=|\mathbb{X}_1||\mathbb{X}_2|=s_1s_2$. Then we can write
$\mathbb{X}$, $\mathbb{X}_1$, and $\mathbb{X}_2$ as:
\begin{eqnarray*}
\mathbb{X}=\{P_{1,1},\ldots,P_{s_1,s_2}\}&
=&\{P_{1,1},\ P_{1,2},\ldots,\ P_{1,s_2},\\
& &\ \, P_{2,1},\ P_{2,2},\ldots,\ P_{2,s_2},\\
& &\ \ \ \  \ \ \ \ \ \ \  \vdots\\
&
&\ 
P_{s_1,1},P_{s_1,2},\ldots,P_{s_1,s_2}\},
\end{eqnarray*}
$\mathbb{X}_1=\{Q_1,\ldots,Q_{s_1}\}$, and
$\mathbb{X}_2=\{R_1,\ldots,R_{s_2}\}$, respectively, where 
$$
Q_i=[(\alpha_{i,1},\alpha_{i,2},\ldots,\alpha_{i,a_1})]\ \mbox{ and
}\ R_j=[(\beta_{j,1},\beta_{j,2},\ldots,\beta_{j,a_2})],
$$
for $i=1,\ldots,s_1$ and $j=1,\ldots,s_2$. Because of the embedding
$\psi$ each $P_{i,j}\in\mathbb{X}$ is of the 
form 
\begin{eqnarray*}
P_{i,j}=\psi(Q_i,R_j)&=&[(\alpha_{i,1}\cdot\beta_{j,1},
\alpha_{i,1}\cdot\beta_{j,2},\ldots,\alpha_{i,1}\cdot\beta_{j,a_2},\\
& &\ \
\alpha_{i,2}\cdot\beta_{j,1},\alpha_{i,2}\cdot\beta_{j,2},\ldots,\alpha_{i,2}\cdot\beta_{j,a_2},\\
& &\ \ \ \  \ \ \ \ \ \ \  \vdots\\
&
&\ \
\alpha_{i,a_1}\cdot\beta_{j,1},\alpha_{i,a_1}\cdot\beta_{j,2},\ldots,\alpha_{i,a_1}\cdot\beta_{j,a_2})].
\end{eqnarray*}

For use below notice that 
for each $i\in[\![1,s_1]\!]$ and for each $j\in[\![1,s_2]\!]$ 
there are $k_i\in[\![1,a_1]\!]$ and $\ell_j\in[\![1,a_2]\!]$
such that $\alpha_{i,k_i}\neq 0$ and $\beta_{j,\ell_j}\neq 0$. In
fact, choose $k_i$ to be the smallest 
$k\in[\![1,a_1]\!]$ such that $\alpha_{i,k}\neq 0$,  and choose 
$\ell_j$ to be the smallest
$\ell\in[\![1,a_2]\!]$ such that $\beta_{j,\ell}\neq 0$. Hence 
$\alpha_{i,k_i}\cdot \beta_{j,\ell_j}\neq 0$.

Setting
$K[\mathbf{t}]=K[t_{1,1},t_{1,2}\ldots,t_{1,a_2},\ldots,t_{a_1,1},t_{a_1,2},\ldots,t_{a_1,a_2}]$
and fixing an integer $d\geq 1$, define
$f_{i,j} (t_{1,1},\ldots,t_{a_1,a_2})=(t_{k_i,\ell_j})^d$. Then
$f_{i,j}(P_{i,j})=(\alpha_{i,k_i}\cdot \beta_{j,\ell_j})^d\neq 0$. The
evaluation map ${\rm ev}_d$ is defined as:
\begin{eqnarray*}
{\rm ev}_d\colon K[\mathbf{t}]_d&\rightarrow
&K^{|\mathbb{X}|}=K^{s_1s_2},\ \ \
\\ 
 f&\rightarrow&\left(\frac{f(P_{1,1})}{f_{1,1}(P_{1,1})},\frac{f(P_{1,2})}{f_{1,2}(P_{1,2})},\ldots,
\frac{f(P_{s_1,s_2})}{f_{s_1,s_2}(P_{s_1,s_2})}\right). 
\end{eqnarray*}
This is a linear map of 
$K$-vector spaces. 
The image of ${\rm ev}_d$, denoted by
$C_{\mathbb{X}}(d)$, defines a projective Reed-Muller-type linear code of
degree $d$ that we call a {\it
projective Segre code\/} of
degree $d$. 

For each $i\in [\![1,s_1]\!]$ and for each $j\in [\![1,s_2]\!]$,
define the following polynomials:
$$ 
g_i(x_1,\ldots,x_{a_1})=x_{k_i}^d\in K[x_1,\ldots,x_{a_1}]_d\ \mbox{
and }\ 
h_j(y_1,\ldots,y_{a_2})=y_{\ell_j}^d\in K[y_1,\ldots,y_{a_2}]_d.
$$
Clearly $g_i(Q_i)=\alpha_{i,k_i}^d\neq 0$, 
$h_j(R_j)=\beta_{j,\ell_j}^d\neq 0$,
$f_{i,j}(P_{i,j})=(\alpha_{i,k_i})^dh_j(R_j)=g_i(Q_i)(\beta_{j,\ell_j})^d$. We also define the following two
evaluation maps: 
\begin{eqnarray*}
{\rm ev}^1_d\colon K[x_1,\ldots,x_{a_1}]_d&\rightarrow
&K^{|\mathbb{X}_1|}=K^{s_1},\ \ \ \\ 
 g&\rightarrow&\left(\frac{g(Q_{1})}{g_{1}(Q_{1})},\frac{g(Q_{2})}{g_{2}(Q_{2})},\ldots,
\frac{g(Q_{s_1})}{g_{s_1}(Q_{s_1})}\right), \mbox{ and}\\
{\rm ev}^2_d\colon K[y_1,\ldots,y_{a_2}]_d&\rightarrow
&K^{|\mathbb{X}_2|}=K^{s_2},\ \ \ \\ 
 h&\rightarrow&\left(\frac{h(R_{1})}{h_{1}(R_{1})},\frac{h(R_{2})}{h_{2}(R_{2})},\ldots,
\frac{h(R_{s_2})}{h_{s_2}(R_{s_2})}\right),
\end{eqnarray*}
and their corresponding Reed-Muller-type linear codes $C_{\mathbb{X}_i}(d):={\rm im}({\rm
ev}^i_d)$ for $i=1,2$. 

\smallskip

Let $C$ be a linear code. From Section~\ref{prelim-section}
recall that $\delta_r(C)$ is the $r$th generalized Hamming weight of
$C$ and that $\delta_1(C)$ is the minimum distance $\delta(C)$
of $C$. 


\smallskip

We come to the main result of this section. 
\begin{theorem}\label{azucena-maria-vila} Let $K=\mathbb{F}_q$ be a
finite field, let $\mathbb{X}_i\subset\mathbb{P}^{a_i-1}$ for 
$i=1,2$, and let $\mathbb{X}$ be the Segre product of $\mathbb{X}_1$ and
$\mathbb{X}_2$. The following hold.
\begin{itemize}
\item[(a)] 
$|\mathbb{X}|=|\mathbb{X}_1||\mathbb{X}_2|$.
\item[(b)] 
$\dim_K(C_{\mathbb{X}}(d))=\dim_K(C_{\mathbb{X}_1}(d))\dim_K(C_{\mathbb{X}_2}(d))$
for $d\geq 1$.
\item[(c)] $C_{\mathbb{X}}(d)$ is the direct
product $C_{\mathbb{X}_1}(d)\,\underline{\otimes}\,C_{\mathbb{X}_2}(d)$ of 
$C_{\mathbb{X}_1}(d)$ and $C_{\mathbb{X}_2}(d)$ for $d\geq 1$.
\item[(d)] 
$\delta({C_\mathbb{X}}(d))=\delta({C_{\mathbb{X}_1}}(d))\delta({C_{\mathbb{X}_2}}(d))$
for $d\geq 1$.
\item[(e)]  $\delta_2(C_{\mathbb{X}}(d))=\min\{\delta_1(C_{\mathbb{X}_1}(d))\delta_2(C_{\mathbb{X}_2}(d)),
\delta_2(C_{\mathbb{X}_1}(d))\delta_1(C_{\mathbb{X}_2}(d))\}$ for
$d\geq 1$.
\item[(f)] $\delta(C_\mathbb{X}(d))=1$ for $d\geq\max\{{\rm
reg}(K[\mathbf{x}]/I(\mathbb{X}_1)), {\rm
reg}(K[\mathbf{y}]/I(\mathbb{X}_2))\}$.
\end{itemize}
\end{theorem}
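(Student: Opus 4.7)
The plan is to dispatch parts (a), (b), (d), (e), (f) as essentially immediate consequences of results already stated in the paper, and to concentrate effort on (c), which is the main structural assertion.

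For part (a), injectivity of the Segre embedding $\psi$ (noted already at the start of Section~\ref{section-segre-codes}) yields $|\mathbb{X}|=|\mathbb{X}_1\times\mathbb{X}_2|=|\mathbb{X}_1||\mathbb{X}_2|$. For part (b) I would combine Proposition~\ref{jan4-15}(i), which identifies $\dim_K C_{\mathbb{X}}(d)$ with $H_{\mathbb{X}}(d)$, together with Theorem~\ref{dec4-14-1}(c), which gives $H_{\mathbb{X}}(d)=H_{\mathbb{X}_1}(d)H_{\mathbb{X}_2}(d)$. Part (f) is then the combination of Theorem~\ref{dec4-14-1}(d) with Proposition~\ref{jan4-15}(ii).

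The heart of the proof is part (c). My approach is to arrange each codeword ${\rm ev}_d(f)$ of $C_{\mathbb{X}}(d)$ as an $s_1\times s_2$ matrix $[c_{i,j}]$ with $c_{i,j}=f(P_{i,j})/f_{i,j}(P_{i,j})$, and exhibit it as a sum of rank-one matrices of the form $\psi_0(u,v)$ with $u\in C_{\mathbb{X}_1}(d)$, $v\in C_{\mathbb{X}_2}(d)$; by Lemma~\ref{jan12-15} this places ${\rm ev}_d(f)$ in $C_{\mathbb{X}_1}(d)\,\underline{\otimes}\,C_{\mathbb{X}_2}(d)$. It suffices to verify this on monomials $t^a=t_{i_1,j_1}\cdots t_{i_d,j_d}$, which span $K[\mathbf{t}]_d$. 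Writing $x^b=x_{i_1}\cdots x_{i_d}$ and $y^c=y_{j_1}\cdots y_{j_d}$, the fact that the $(k,\ell)$-coordinate of $P_{i,j}$ is $\alpha_{i,k}\beta_{j,\ell}$ gives the key identity
$$
t^a(P_{i,j})=\prod_{r=1}^d \alpha_{i,i_r}\beta_{j,j_r}=x^b(Q_i)\cdot y^c(R_j),
$$
and by construction $f_{i,j}(P_{i,j})=g_i(Q_i)h_j(R_j)$. Therefore
$$
\frac{t^a(P_{i,j})}{f_{i,j}(P_{i,j})}=\frac{x^b(Q_i)}{g_i(Q_i)}\cdot\frac{y^c(R_j)}{h_j(R_j)},
$$
so the matrix of ${\rm ev}_d(t^a)$ is exactly $\psi_0({\rm ev}^1_d(x^b),{\rm ev}^2_d(y^c))$, which lies in $C_{\mathbb{X}_1}(d)\,\underline{\otimes}\,C_{\mathbb{X}_2}(d)$. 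This yields the inclusion $C_{\mathbb{X}}(d)\subseteq C_{\mathbb{X}_1}(d)\,\underline{\otimes}\,C_{\mathbb{X}_2}(d)$, and the two $K$-vector spaces have the same dimension by (b) and Theorem~\ref{dec29-14}, hence they are equal.

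With (c) in hand, parts (d) and (e) are just Theorems~\ref{dec29-14} and \ref{wei-yang-th} applied to $C_{\mathbb{X}_1}(d)$ and $C_{\mathbb{X}_2}(d)$. The main obstacle is really only bookkeeping in (c): one must make sure that the chosen normalizations $f_{i,j}$, $g_i$, $h_j$ are compatible, but this was built in by the paper's choice $f_{i,j}=t_{k_i,\ell_j}^d$, $g_i=x_{k_i}^d$, $h_j=y_{\ell_j}^d$, which was exactly set up so that $f_{i,j}(P_{i,j})=g_i(Q_i)h_j(R_j)$. Once this is observed, the argument is essentially the multiplicativity of monomial evaluation on the Segre image plus a dimension count.
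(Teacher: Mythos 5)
Your proposal is correct, and for parts (a), (b), (d), (e), (f) it coincides with the paper's proof (same references, same reductions). Where you diverge is in the mechanism for the key inclusion in (c). The paper takes an arbitrary $f\in K[\mathbf{t}]_d$, arranges ${\rm ev}_d(f)$ as an $s_1\times s_2$ matrix, and shows directly that its rows lie in $C_{\mathbb{X}_2}(d)$ and its columns in $C_{\mathbb{X}_1}(d)$ by introducing the substituted polynomials $h_{Q_i}(y)=f(\alpha_{i,1}y_1,\ldots,\alpha_{i,a_1}y_{a_2})$ and $g_{R_j}(x)=f(x_1\beta_{j,1},\ldots,x_{a_1}\beta_{j,a_2})$, using $f(P_{i,j})=h_{Q_i}(R_j)=g_{R_j}(Q_i)$; this matches the definition of $C_{\mathbb{X}_1}(d)\,\underline{\otimes}\,C_{\mathbb{X}_2}(d)$ verbatim. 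You instead reduce to monomials $t^a$ and show that ${\rm ev}_d(t^a)$ is the rank-one matrix $\psi_0({\rm ev}^1_d(x^b),{\rm ev}^2_d(y^c))$, then invoke linearity; the normalization identity $f_{i,j}(P_{i,j})=g_i(Q_i)h_j(R_j)$ you single out is exactly the compatibility the paper also relies on. Both routes then finish with the identical dimension count via (b) and Theorem~\ref{dec29-14}. Your version has the merit of exhibiting $C_{\mathbb{X}}(d)$ as spanned by ``pure tensor'' codewords, which ties in naturally with Lemma~\ref{jan12-15} and mirrors, at the level of codes, the map $\varphi_0(x^b,y^c)=t^a$ from the proof of Theorem~\ref{dec4-14-1}(a) (note that membership of $\psi_0(u,v)$ in the direct product is in fact immediate from the definition, so Lemma~\ref{jan12-15} is not strictly needed there); the paper's version avoids choosing a monomial basis and handles an arbitrary codeword in one stroke. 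Either argument is complete.
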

\begin{proof} (a): This is clear because the Segre embedding is a
one-to-one map. 

(b): Since $K[\mathbf{x}]_d/I(\mathbb{X}_1)_d\simeq
C_{\mathbb{X}_1}(d)$, $K[\mathbf{y}]_d/I(\mathbb{X}_2)_d\simeq
C_{\mathbb{X}_2}(d)$, and $K[\mathbf{t}]_d/I(\mathbb{X})_d\simeq
C_{\mathbb{X}}(d)$, the results follows at once from 
Theorem~\ref{dec4-14-1}. 

(c): Given $f\in K[\mathbf{t}]_d$, the entries of ${\rm ev}_d(f)$
can be arranged as:
\begin{eqnarray}\label{k-product-iso}
{\rm ev}_d(f)&=
&\left(\frac{f(P_{1,1})}{f_{1,1}(P_{1,1})},\ \
\frac{f(P_{1,2})}{f_{1,2}(P_{1,2})},\ldots,\ \ 
\frac{f(P_{1,s_2})}{f_{1,s_2}(P_{1,s_2})}\right.,\ \ \ \ \ \rightarrow
\Gamma_1\\ 
& &\ \ \
\frac{f(P_{2,1})}{f_{2,1}(P_{2,1})},\ \
\frac{f(P_{2,2})}{f_{2,2}(P_{2,2})},\ldots,\ \ 
\frac{f(P_{2,s_2})}{f_{2,s_2}(P_{2,s_2})},\ \ \ \ \ \rightarrow
\Gamma_2\nonumber\\
& &\ \ \ \ \ \ \ \ \ \  \vdots\ \ \ \ \ \ \ \ \ \ \ \ \ \vdots\ \ \ \ \ \
\ \ \ \ \ \ \ \ \ \ \ \ \ \ \vdots\ \ \ \ \ \ \ \ \ \ \ \ \ \ \ \ \vdots\nonumber \\
& &\ \ \
\left.\frac{f(P_{s_1,1})}{f_{s_1,1}(P_{s_1,1})},\frac{f(P_{s_1,2})}{f_{s_1,2}(P_{s_1,2})},\ldots,
\frac{f(P_{s_1,s_2})}{f_{s_1,s_2}(P_{s_1,s_2})}\right)\rightarrow
\Gamma_{s_1}\nonumber\\ 
& & \ \ \ \ \ \ \ \ \ \ \downarrow\ \ \ \ \ \ \ \ \ \  \ \ \ \downarrow
\ \ \ \ \ \ \ \ \ \ \ \ \ \ \ \ \ \ \downarrow\nonumber\\ 
& & \ \ \ \ \ \ \ \ \ \ \Lambda_1\ \ \ \ \ \ \ \ \ \ \ \Lambda_2\ \ \ \ \ \cdots 
\ \ \ \  \ \ \Lambda_{s_2}\nonumber 
\end{eqnarray}
where $\Gamma_1,\ldots,\Gamma_{s_1}$ and
$\Lambda_1,\ldots,\Lambda_{s_2}$ are row and column vectors,
respectively. Thus ${\rm ev}_d(f)$ can be viewed as a matrix of size
$s_1\times s_2$. 
Next we show that $\Gamma_i\in C_{\mathbb{X}_2}(d)$ and
$\Lambda_j^\top\in C_{\mathbb{X}_1}(d)$ for all $i,j$.  
Define the following polynomials
\begin{eqnarray*}
h_{Q_i}&=& f(\alpha_{i,1}\cdot y_1,\alpha_{i,1}\cdot
y_2,\ldots,\alpha_{i,1}\cdot y_{a_2},\\ 
& &\ \ \ \alpha_{i,2}\cdot y_1,\alpha_{i,2}\cdot
y_2,\ldots,\alpha_{i,2}\cdot y_{a_2},\\ 
& &\ \ \ \ \ \ \ \ \ \ \ \ \ \ \ \ \ \ \ \ \ \ \vdots \\ 
& &\ \ \  \alpha_{i,a_1}\cdot y_1,\alpha_{i,a_1}\cdot
y_2,\ldots,\alpha_{i,a_1}\cdot y_{a_2})\in
K[y_1,\ldots,y_{a_2}]_d,\mbox{ and}\\ 
& & \\ 
g_{R_j}&=&
f(x_1\cdot\beta_{j,1},x_1\cdot\beta_{j,2},\ldots,x_1\cdot\beta_{j,a_2},\\ 
& &\ \ \ x_2\cdot\beta_{j,1},x_2\cdot\beta_{j,2}
,\ldots,x_2\cdot\beta_{j,a_2},\\ 
& &\ \ \ \ \ \ \ \ \ \ \ \ \ \ \ \ \ \ \ \ \ \ \vdots \\ 
& &\ \ \  x_{a_1}\cdot\beta_{j,1},x_{a_1}\cdot\beta_{j,2}
,\ldots,x_{a_1}\cdot\beta_{j,a_2} )\in K[x_1,\ldots,x_{a_1}]_d.
\end{eqnarray*}
Observe that $f(P_{ij})=h_{Q_i}(R_j)=g_{R_j}(Q_i)$.  Noticing the equalities 
\begin{eqnarray*}
\Gamma_i&=
&\left(\frac{f(P_{i1})}{f_{i1}(P_{i1})},\frac{f(P_{i2})}{f_{i2}(P_{i2})},\ldots,
\frac{f(P_{is_2})}{f_{is_2}(P_{is_2})}\right)=\\ 
& &\ \ \  \left(\frac{h_{Q_i}(R_1)}{\alpha_{i,k_i}^d\cdot
h_1(R_1)},\frac{h_{Q_i}(R_2)}{\alpha_{i,k_i}^d\cdot h_2(R_2)},\ldots, 
\frac{h_{Q_i}(R_{s_2})}{\alpha_{i,k_i}^d\cdot
h_{s_2}(R_{s_2})}\right)=\frac{1}{(\alpha_{i,k_i})^d}\cdot {\rm
ev}_d^2(h_{Q_i}),\\
\Lambda_j^\top&=&\frac{1}{(\beta_{j,\ell_j})^d}\cdot {\rm
ev}_d^1(g_{R_j}),
\end{eqnarray*}
for $i=1,\ldots,s_1$ and $j=1,\ldots,s_2$, we get that $\Gamma_i\in C_{\mathbb{X}_2}(d)$ and
$\Lambda_j^\top\in C_{\mathbb{X}_1}(d)$ for all $i,j$. This proves
that $C_{\mathbb{X}}(d)$ can be regarded as a linear subspace
of $C_{\mathbb{X}_1}(d)\,\underline{\otimes}\, C_{\mathbb{X}_2}(d)$. 
By part (b) and Theorem~\ref{dec29-14} the linear codes $C_{\mathbb{X}}(d)$ and 
$C_{\mathbb{X}_1}(d)\,\underline{\otimes}\, C_{\mathbb{X}_2}(d)$ have
the same dimension. Hence these linear spaces must be equal. 

(d): From Theorem~\ref{dec29-14} and part (c), one has 
$\delta({C_\mathbb{X}}(d))=\delta({C_{\mathbb{X}_1}}(d))\delta({C_{\mathbb{X}_2}}(d))$
for $d\geq 1$.

(e): It follows at once from
Theorem~\ref{wei-yang-th} and part (c). 

(f): This follows from Proposition~\ref{jan4-15}(ii) and
Theorem~\ref{dec4-14-1}(d). 
\end{proof}

\begin{remark} 
This result tells us that the direct product of
projective Reed-Muller-type codes is again a projective Reed-Muller-type code. 
\end{remark}

\begin{definition} If $K^*=K\setminus\{0\}$
and $\mathbb{X}_i$ is the image of $(\mathbb{K}^*)^{a_i}$, under the 
map $(K^*)^{a_i}\rightarrow\mathbb{P}^{a_i-1}$, $x\rightarrow [x]$, we call
$\mathbb{X}_i$ a {\it projective torus\/} in $\mathbb{P}^{a_i-1}$.   
\end{definition}

Our main theorem gives a wide generalization of most of the main results 
of \cite{GR,G-SR-M,GRH,GRT}.

\begin{remark} If $\mathbb{X}_1=\mathbb{P}^{a_1-1}$ 
and $\mathbb{X}_2=\mathbb{P}^{a_2-1}$, using
Theorem~\ref{azucena-maria-vila} we recover the formula for the
minimum distance of $C_\mathbb{X}(d)$ given in 
\cite[Theorem~5.1]{GRT}, and if $\mathbb{X}_i$ is a projective torus
for $i=1,2$, using Theorem~\ref{azucena-maria-vila} we recover the formula for the
minimum distance of $C_\mathbb{X}(d)$ given in \cite[Theorem~5.5]{GR}. 
In these two cases formulas for the
basic parameters of $C_{\mathbb{X}_i}(d)$, $i=1,2$,  
are given in \cite[Theorem~1]{sorensen} and
\cite[Theorem~3.5]{ci-codes}, respectively. We also recover the
formulas for the second generalized
Hamming weight of some evaluation codes arising from complete
bipartite graphs given in \cite[Theorem~5.1]{G-SR-M} and
\cite[Theorem~3]{GRH} (see Corollary~\ref{jan9-15}).
\end{remark}

It turns out that the formula given in
Theorem~\ref{azucena-maria-vila}(e) 
is a far reaching generalization of the following result. 

\begin{corollary}{\cite[Theorem~5.1]{G-SR-M}}\label{jan9-15}
Let $\mathbb{X}$ be the Segre product
of two projective torus $\mathbb{X}_1$ and $\mathbb{X}_2$. 
Then the second generalized
Hamming weight of $C_{\mathbb{X}}(d)$ is given by 
$$
\delta_2(C_{\mathbb{X}}(d))=\min\{\delta_1(C_{\mathbb{X}_1}(d))\delta_2(C_{\mathbb{X}_2}(d)),
\delta_2(C_{\mathbb{X}_1}(d))\delta_1(C_{\mathbb{X}_2}(d))\}.
$$
\end{corollary}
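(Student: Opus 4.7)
The plan is essentially a one-line derivation: this corollary is just Theorem~\ref{azucena-maria-vila}(e) specialized to the case in which both $\mathbb{X}_1$ and $\mathbb{X}_2$ are projective tori. First I would observe that each projective torus $\mathbb{X}_i$ is, by definition, a non-empty subset of $\mathbb{P}^{a_i-1}$, so the hypotheses of Theorem~\ref{azucena-maria-vila} are satisfied with no modification. Its Segre product $\mathbb{X}$ is therefore the object to which parts (c) and (e) of that theorem apply.

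Next, I would simply quote Theorem~\ref{azucena-maria-vila}(e) to obtain the formula
$$\delta_2(C_{\mathbb{X}}(d)) = \min\{\delta_1(C_{\mathbb{X}_1}(d))\delta_2(C_{\mathbb{X}_2}(d)),\, \delta_2(C_{\mathbb{X}_1}(d))\delta_1(C_{\mathbb{X}_2}(d))\}.$$
No additional computation is needed, because the underlying mechanism, namely the identification of $C_{\mathbb{X}}(d)$ with the direct product $C_{\mathbb{X}_1}(d)\,\underline{\otimes}\,C_{\mathbb{X}_2}(d)$ via Theorem~\ref{azucena-maria-vila}(c), followed by the Wei--Yang formula of Theorem~\ref{wei-yang-th}, has already been carried out in the general setting.

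Since the real interest of the corollary is to recover \cite[Theorem~5.1]{G-SR-M}, the only thing one might want to add, in order to match the two formulas verbatim, is a brief remark that our $C_{\mathbb{X}_i}(d)$ (the evaluation code on the projective torus) agrees with the generalized projective Reed--Solomon code considered in that reference, and that the individual quantities $\delta_1(C_{\mathbb{X}_i}(d))$ and $\delta_2(C_{\mathbb{X}_i}(d))$ coincide with the corresponding invariants there. This is a definitional check rather than a mathematical obstacle, so no part of the argument presents real difficulty; the whole content of the corollary is that Theorem~\ref{azucena-maria-vila}(e) contains \cite[Theorem~5.1]{G-SR-M} as a special case.
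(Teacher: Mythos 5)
Your proposal is correct and matches the paper's own treatment: the corollary is stated as an immediate specialization of Theorem~\ref{azucena-maria-vila}(e) (which rests on parts (c) and Theorem~\ref{wei-yang-th}) to the case where both $\mathbb{X}_i$ are projective tori, exactly as you argue. The paper offers no separate proof beyond this observation, so no further detail is needed.
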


\begin{remark} 
The knowledge of the regularity of $K[\mathbf{t}]/I(\mathbb{X})$ is important for applications to 
coding theory: for $d\geq {\rm reg}(K[\mathbf{t}]/I(\mathbb{X}))$ the
projective Segre code $C_\mathbb{X}(d)$ has minimum distance equal to
$1$ by Theorem~\ref{azucena-maria-vila}(f). Thus, potentially 
good projective Segre codes $C_\mathbb{X}(d)$ can occur only if $1\leq d < {\rm
reg}(K[\mathbf{t}]/I(\mathbb{X}))$. 
\end{remark}

\begin{definition} If $\mathbb{X}$ is parameterized by monomials
$z^{v_1},\ldots,z^{v_s}$, we say that $C_{\mathbb{X}}(d)$ is a
{\it parameterized projective code} of degree $d$.
\end{definition}

\begin{corollary} If $C_{\mathbb{X}_i}(d)$ is a parameterized
projective code of degree $d$ for $i=1,2$, then so is the corresponding projective Segre code
$C_{\mathbb{X}}(d)$.
\end{corollary}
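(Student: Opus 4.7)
The plan is to unwind the definition of a parameterized projective code and verify that Segre products convert parameterizations of the factors into a parameterization of the product, simply because products of monomials are monomials.

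First, I would spell out the hypothesis: for $i=1,2$, write $\mathbb{X}_i$ as the image of a monomial parameterization. To keep the two parameterizations from interfering, I would use disjoint variable sets, say $u=(u_1,\ldots,u_m)$ for $\mathbb{X}_1$ and $w=(w_1,\ldots,w_n)$ for $\mathbb{X}_2$, so that
\[
\mathbb{X}_1=\{[u^{v_1}:\cdots:u^{v_{a_1}}]\}\ \text{and}\ \mathbb{X}_2=\{[w^{w_1}:\cdots:w^{w_{a_2}}]\}
\]
with exponent vectors $v_i\in\mathbb{N}^m$ and $w_j\in\mathbb{N}^n$. Then I would introduce the concatenated variable $z=(u_1,\ldots,u_m,w_1,\ldots,w_n)$ and the concatenated exponent vectors $(v_i,w_j)\in\mathbb{N}^{m+n}$, so that each product $u^{v_i}w^{w_j}$ is a single monomial $z^{(v_i,w_j)}$ in the variables $z$.

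Next I would apply the definition of the Segre embedding $\psi$: a point of $\mathbb{X}=\psi(\mathbb{X}_1\times\mathbb{X}_2)$ has the form
\[
\psi([u^{v_i}],[w^{w_j}])=[(u^{v_i}w^{w_j})_{1\le i\le a_1,\,1\le j\le a_2}]=[(z^{(v_i,w_j)})_{i,j}].
\]
Thus $\mathbb{X}$ is exactly the image of the monomial parameterization of $\mathbb{P}^{a_1a_2-1}$ given by the $a_1a_2$ monomials $\{z^{(v_i,w_j)}\}$ (in some fixed order matching the ordering of the $t_{i,j}$ variables). By definition, this makes $C_{\mathbb{X}}(d)$ a parameterized projective code of degree $d$, as desired.

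There is essentially no obstacle here; the corollary is a direct bookkeeping consequence of the fact that the Segre embedding multiplies coordinates, and the class of monomials is closed under multiplication. The only minor subtlety is to insist on disjoint variable sets for the two factors, which can always be arranged by relabeling.
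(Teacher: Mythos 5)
Your proposal is correct and follows essentially the same route as the paper: the paper's proof likewise observes that if $\mathbb{X}_1$ and $\mathbb{X}_2$ are parameterized by monomials $z^{v_1},\ldots,z^{v_s}$ and $w^{u_1},\ldots,w^{u_r}$ in (separate) sets of variables, then $\mathbb{X}$ is parameterized by the products $z^{v_i}w^{u_j}$, which are again monomials. Your only extra care --- insisting on disjoint variable sets and fixing the ordering to match the $t_{i,j}$ --- is harmless bookkeeping already implicit in the paper's one-line argument (though note your notation uses $w_j$ both for variables and exponent vectors, which you would want to disambiguate).
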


\begin{proof} It suffices to observe that if $\mathbb{X}_1$ and 
$\mathbb{X}_2$ are parameterized by $z^{v_1},\ldots z^{v_s}$ and 
$w^{u_1},\ldots w^{u_r}$, respectively, then $\mathbb{X}$ is
parameterized by $z^{v_i}w^{u_j}$, $i=1,\ldots,s$, $j=1,\ldots,r$. 
\end{proof}

\noindent {\bf Acknowledgments.} We thank the referees for their
careful reading of the paper and for the improvements that
they suggested.

\bibliographystyle{plain}

\end{document}